\documentclass[11pt]{article}
\usepackage{amsmath,amssymb,amsfonts,amsthm}
\usepackage{graphicx}
\usepackage[a4paper,margin=2.5cm]{geometry}
\usepackage{color}
\usepackage{enumitem}
\usepackage{dcolumn}
\usepackage{lscape}
\usepackage{comment}
\newcolumntype{d}[1]{D{.}{\cdot}{#1}}
\newcolumntype{.}{D{.}{.}{-1}}
\newcolumntype{,}{D{,}{,}{-1}}

\usepackage{hhline}
\usepackage{multirow}

\newcommand{\Tvrs}{T_{\mbox{\tiny{VRS-TO}}}}
\newcommand{\Efgl}{E_{\mbox{\tiny FGL}}}
\newcommand{\Tcrs}{T_{\mbox{\tiny{CRS}}}}
\newcommand{\barEfgl}{\bar{E}_{\mbox{\tiny FGL}}}
\allowdisplaybreaks

\newfont{\bg}{cmr10 scaled\magstep4} 

\newcommand{\bigzerou}{\smash{\lower1.7ex\hbox{\bg 0}}}
\usepackage{bm}
\usepackage{upgreek}
\usepackage{longtable}
\usepackage{here}
\usepackage{lineno}
\usepackage{blkarray, bigstrut}
\usepackage{mathrsfs}
\newcommand{\bbR}{\mathbb{R}}

\usepackage[%
	hidelinks,%
  setpagesize=false,%
  bookmarks=true,%
  bookmarksdepth=tocdepth,%
  bookmarksnumbered=true,%
  colorlinks=false,%
  pdftitle={},%
  pdfsubject={},%
  pdfauthor={},%
  pdfkeywords={}%
]{hyperref}
\newtheorem{theorem}{Theorem}[section]
\newtheorem{lemma}{Lemma}[section]

\definecolor{myred}{RGB}{255,50,50}         
\definecolor{mygreen}{RGB}{30,200,30}

\providecommand{\keywords}[1]
{
  \small	
  \textbf{\textit{Keywords---}} #1
}

\title{A closer target setting approach to boundary problems with the Russell graph measure}
\author{
\begin{tabular}[h]{ccc}  
Atsushi Hori & Kazuyuki Sekitani \\ 
\textit{Nagoya Institute of Technology}&\textit{Seikei University}
\end{tabular}
}
\date{May 7, 2026} 
\begin{document}
\maketitle
\begin{abstract}
  A Russell graph measure (RGM) is one of the standard DEA models,
	but its efficiency measure is not well-defined---or has unacceptable properties---at the
boundary of the non-negative orthant.
This is known as a boundary problem.
  Existing studies have tackled this issue; however, their 
  models may fail to identify an efficient target or fail to  
  satisfy some desirable  properties of efficiency measures.
	In this paper, we incorporate a closer target setting approach into the RGM
  model with production trade-offs to overcome such issues.
  We demonstrate that the efficiency measure of the proposed model overcomes
	the boundary problem and has stronger properties than existing models.
	We also demonstrate that the efficiency scores of the proposed model can
  be computed by solving a series of LPs. 
  We conduct a numerical experiment with a real-world dataset
  to illustrate how targets provided by our model are realistic
  compared with the existing model, which also suggests the
  validity of our model in applications.
\end{abstract}
\keywords{DEA, closest target,
Russell graph measure, monotonicity, production trade-offs}
\section{Introduction}
Data envelopment analysis (DEA)~\cite{banker1984some,charnes1978measuring}
is an effective  nonparametric method for estimating the set of feasible combinations of inputs and outputs, also called the production possibility set (PPS), and 
for evaluating the efficiency of decision making units (DMUs) relative to the estimated set. 
DEA models are formulated as mathematical programming problems of maximization or minimization, which are flexibly defined by 
the choice of PPS and objective function depending on DEA applications. 
One of the practical advantages of DEA is to provide an efficiency score and
target for DMUs, which are often computed via linear programming (LP). 
\par
\par
Efficiency scoring of an input--output vector of the PPS  is a map from
the PPS to the range $[0,1]$, which is referred to as an efficiency measure.  
Our research is situated in the field of axiomatic analysis in the literature
on efficiency measurement.
F{\"a}re and Lovell~\cite{fare1978measuring} provided three fundamental axioms of 
the input-oriented efficiency measure as follows:
\begin{itemize}
\item input-indication (the measure is equal to one if and only if the input vector is technically efficient in the sense of~\cite{koopmans1951analysis});
\item input-homogeneity (e.g., doubling all input quantities while holding all outputs constant cuts the measure in half);
\item strong monotonicity in inputs (increasing one input quantity while holding all other inputs and all outputs constant lowers the measure). 
\end{itemize}
By virtue of Russell~\cite{10.1007/978-3-642-52481-3_18}, Blackorby and Russell~\cite{blackorby1999aggregation}, and Russell and Schworm~\cite{10.2307/23883832},  
the initial axioms~\cite{fare1978measuring} were clarified and extended to axioms of  an efficiency measure defined on the full space of positive inputs and positive outputs.
Specifically, Russell and Schworm~\cite{10.2307/23883832} showed that no inefficiency measure can satisfy both indication and continuity.
This incompatibility shows that any efficiency measure of DEA models
on the positive full space is classified into two groups: 
Those that are continuous in the PPS and those that satisfy indication  since any efficiency measure  is equivalently reduced to an inefficiency measure. 
In fact, the latter group includes representative efficiency measures of popular DEA models such as the Russell graph measure (RGM,~\cite{fare1985measurement}), the slacks-based measure
(SBM,~\cite{tone2001slacks}) which is equivalent to the
enhanced RGM~\cite{pastor1999enhanced} and the Range adjusted measure~\cite{cooper1999ram}; see also Sueyoshi and Sekitani~\cite{SUEYOSHI2009764}. 
\par
The RGM~\cite{fare1985measurement} 
simultaneously accounts for the inefficiency in both inputs and outputs.
The RGM  satisfies unit invariance, but it fails to satisfy both 
indication and strong monotonicity in the full space of  non-negative inputs and  non-negative outputs.
Levkoff et al.~\cite{levkoff2012boundary} pointed out that the failure  in the  non-negative full space
is caused by the adjustment work for  zero output data, which is called the boundary problem.
To address the problem, they slightly modified the objective function of the RGM DEA model.
Their efficiency measure  satisfies indication and weak monotonicity,
which is a relaxation of strong monotonicity, but the modified RGM model 
may have no optimal solution, thereby failing to provide any target for the assessed DMU.
\par
This study aims to  overcome the boundary problem by  incorporating a closer target setting approach into the RGM model with production trade-offs. 
The closer target setting approach 
assumes  that closest targets are very similar to the assessed DMU, and the closest targets also lead to efficiency projections that may be reached with less effort than other alternatives.
Recent developments of the closer target setting DEA models are summarized  in~\cite{aparicio2016survey,aparicio2007closest,portela2003finding,razipour2020finding,Sekitani02032025}.  
To  enhance the reliability of the RGM and the  practicality of the targets,
we  show that the proposed  RGM satisfies  strong monotonicity and its  target achieves 
the least input-distance or the least output-distance.
Moreover, the proposed RGM has a computational advantage such that  the  efficiency measurement is implemented by solving a series of LPs.
\par
As a relevant study,
Sekitani and Zhao~\cite{Sekitani02032025} also developed  the closer target setting approach to the RGM model under an empirical  production possibility without a trade-offs axiom.
Furthermore, they restricted the assessed DMUs to those with positive input and output data. 
Hence, the closer target setting approach to RGM by~\cite{Sekitani02032025} cannot overcome 
the boundary problem, while our approach can.
\par
The paper unfolds as follows: 
Section~\ref{sec:prelim} introduces the notation and describes the 
general assumptions on the production possibility set and 
desirable properties of efficiency measures.
Section~\ref{sec:model} 
discusses existing RGM-type efficiency measures on trade-offs axioms 
in order to clarify the motivation of this study.
Section~\ref{sec:effMeasure} introduces the extension of the 
closer target setting approach to RGM and proves that it satisfies 
indication and strong monotonicity at the boundary. 
Section~\ref{sec:Comp} discusses the implementation of checking frontier assumptions
and detection of the so-called \emph{free lunch} which produces outputs with zero input.
Section~\ref{sec:examples} illustrates the
practicality of the concept by applying it to a real-world dataset,
on Olympic Games performance. Section~\ref{sec:conclusion} concludes
this paper.

\section{Preliminaries}\label{sec:prelim}
Consider $n$ DMUs, and let DMU$_j$ be denoted as the $j$th DMU,
$j\in\{1,\dots,n\}$.
For each DMU, there are $m$ inputs $\bm{x}_j:=(x_{1j},\ldots,x_{mj})^\top\in\bbR^m$ 
and $s$ outputs $\bm{y}_j:=(y_{1j},\ldots,y_{sj})^\top\in\bbR^s$,
where $^\top$ denotes the transpose of a vector.
Unless otherwise noted, the concatenated vector is denoted without $^\top$, e.g.,
$(\bm{x},\bm{y}) = (\bm{x}^\top,\bm{y}^\top)^\top\in\bbR^{m+s}$ for $\bm{x}\in\bbR^m$ and $\bm{y}\in\bbR^s$.
Let $\bbR^D_+:=\{\bm{z}\in\bbR^D\mid \bm{z}\ge\bm{0}\}$,
$\bbR^D_{++}:=\{\bm{z}\in\bbR^D\mid \bm{z}>\bm{0}\}$,
$\bar{\bbR}^D_{+}:=\bbR^D_+\setminus \{\bm{0}\}$
and $\bm{1}_D$ be the all-ones $D$-dimensional vector.
Assume that
$(\bm{x}_j,\bm{y}_j)\in  \bar{\bbR}^{m}_{+}  \times \bar{ \bbR}^{s}_{+}$
for all $j=1,\ldots,n$. 
\par
According to Podinovski~\cite{podinovski2004production}, the trade-offs axiom 
for a  production possibility set (PPS) $T$ and directions $(\bm{r}^-_t,\bm{r}^+_t)\in \bbR^m\times \bbR^s$ $(t=1,\ldots,K)$  is defined as follows:
$(\bm{x}+\sum_{t=1}^K \pi_t \bm{r}^-_t, \bm{y}+\sum_{t=1}^K \pi_t \bm{r}^+_t )\in T$ for any  $\bm{\pi}\geq \bm{0}$
and any  $(\bm{x},\bm{y})\in T$ satisfying $(\bm{x}+\sum_{t=1}^K \pi_t \bm{r}^-_t, \bm{y}+\sum_{t=1}^K \pi_t \bm{r}^+_t )\in \bbR^m_+\times \bbR^s_+$.
Let $R^-:=[\bm{r}^-_1\cdots \bm{r}^-_K]\in\bbR^{m\times K}$ and $R^+:=[\bm{r}^+_1\cdots \bm{r}^+_K]\in\bbR^{s\times K}$.
\par
Define the PPS $\Tvrs$ with production trade-offs as follows:
\begin{align}
\Tvrs:=
\left\{ 
\left(\bm{x},\bm{y}\right) 
\begin{array}{|l}
 \sum_{j=1}^n \lambda_j \bm{x}_j +\sum_{t=1}^K \pi_t \bm{r}^-_t \leq \bm{x}, \\
 \sum_{j=1}^n \lambda_j \bm{y}_j +\sum_{t=1}^K \pi_t \bm{r}^+_t \geq \bm{y}, \\
 \sum_{j=1}^n \lambda_j =1, \\
 \lambda_j \geq 0\ (j=1,\ldots,n), \\
  \pi_t \geq 0\ (t=1,\ldots,K)  
\end{array}
\right\}
\cap \left( \bbR^{m}_+\times  \bar{\bbR}^{s}_{+} \right).   \label{PPS}
\end{align}
If $(\bm{r}^-_t,\bm{r}^+_t)=(\bm{0},\bm{0})$ for all $t=1,\ldots,K$, then $\Tvrs$ has no production trade-offs between inputs and outputs, which reduces to a conventional PPS under the variable returns to scale~\cite{banker1984some}.
Let $(\bm{r}^-_t, \bm{r}^+_t) := (\bm{x}_t, \bm{y}_t)$  for each $t = 1,\ldots, K$,
where $K=n$, and $(\bm{x}_{n+1}, \bm{y}_{n+1}) := (\bm{0}, \bm{0})$.
Then, $\Tvrs$ is reduced to a classic PPS under the constant returns to scale developed by  Charnes et al.~\cite{charnes1978measuring}. This is denoted by $T_{\mbox{\tiny CRS}}$.
\par
Tone~\cite{tone2001slacks} developed the so-called SBM-AR model which uses the following superset of $\Tvrs$:
\begin{align}
P := 
\left\{ 
\left(\bm{x},\bm{y}\right) 
\begin{array}{|l}
 \sum_{j=1}^n \lambda_j \bm{x}_j +\sum_{t=1}^K \pi_t \bm{r}^-_t \leq \bm{x}, \\
 \sum_{j=1}^n \lambda_j \bm{y}_j +\sum_{t=1}^K \pi_t \bm{r}^+_t \geq \bm{y}, \\
 \sum_{j=1}^n \lambda_j =1, \\
 \lambda_j \geq 0\ (j=1,\ldots,n), \\
  \pi_t \geq 0\ (t=1,\ldots,K)  
\end{array}
\right\}.\label{eqP}
\end{align}
The set $P$ is 
a polyhedron which has a finite number of facets, and hence there exist
$(\bm{v}^l,\bm{u}^l)\in \bar{\bbR}^{m+s}_+$ and a scalar $\sigma^l\in \bbR$
for all $l=1,\ldots,L$ such that 
\begin{align}
P=
  \{(\bm{x},\bm{y})\mid\bm{v}^l\bm{x}-\bm{u}^l\bm{y}-\sigma^l \geq 0,
  \ l=1,\dots,L\}, \label{P}
\end{align}
where all pairs $(\bm{v}^l,\bm{u}^l)$, $l=1,\dots,L$, 
 denote each normal direction for  facets  of  $P$ and $\sigma^l=\min\{ \bm{v}^l\bm{x}-\bm{u}^l\bm{y} \, \left|\, (\bm{x},\bm{y})\in P \right. \}.$
Hereinafter, suppose that $\bm{v}^l$ and $\bm{u}^l$ are row vectors.
The PPS $\Tvrs$ defined by~\eqref{PPS} is written as
\[
T_{\mbox{\tiny VRS-TO}}=P\cap\left(   \bbR^{m}_+\times \bar{\bbR}^{s}_{+} \right)=
  \{(\bm{x},\bm{y})\mid\bm{v}^l\bm{x}-\bm{u}^l\bm{y}-\sigma^l \geq 0,
  \ l=1,\dots,L\} \cap \left( \bbR^{m}_+\times  { \bar{\bbR}^{s}_{+}} \right).
\]

Let $\cal T$ be the class of PPS for the production trade-offs technology with $K$ feasible directions $\{ (\bm{r}^-_1,\bm{r}^+_1),\ldots, (\bm{r}^-_K,\bm{r}^+_K)\}$. 
For a PPS $T\in {\cal T}$, we define the strongly and weakly efficient 
frontiers as follows:
\begin{align}
\partial^s({T}):= \left\{\, (\bm{x},\bm{y})\in {T} \, \left|
\begin{array}{l}  (\bm{x},-\bm{y}) \geq  (\bm{x}',-\bm{y}'),  \\
(\bm{x},-\bm{y}) \not=  (\bm{x}',-\bm{y}') 
\end{array}
\implies   (\bm{x}',\bm{y}')\notin {T}
\right\} \right.,\label{partialS} \\
\intertext{and}
\partial^w({T}):= \left\{\, (\bm{x},\bm{y})\in {T} \, \left|
 (\bm{x},-\bm{y}) >  (\bm{x}',-\bm{y}') 
\implies   (\bm{x}',\bm{y}')\notin {T}
\right\}, \right. \label{partialW} 
\end{align}
respectively.

The following lemmas play central roles in this paper.
\begin{lemma}
  \label{lem:positive.vandu}
  For any \emph{PPS} $P$ given by~\eqref{P},
  $\partial^s(P)=\partial^w(P)$ if and only if  
  $(\bm{v}^l,\bm{u}^l)\in \bbR^{m+s}_{++}$ for all $l=1,\ldots,L$.
\end{lemma}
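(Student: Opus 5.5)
The plan is to prove both directions directly from the facet representation \eqref{P}. The only extra input is that this representation is irredundant: each $l$ indexes a genuine facet of $P$. Note first that $\partial^s(P)\subseteq\partial^w(P)$ always holds. Indeed, if $(\bm{x},-\bm{y})>(\bm{x}',-\bm{y}')$, then in particular $(\bm{x},-\bm{y})\ge(\bm{x}',-\bm{y}')$ and the two vectors differ. So in each direction only the reverse inclusion, or its failure, has to be examined.

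\emph{Sufficiency.} Assume $(\bm{v}^l,\bm{u}^l)\in\bbR^{m+s}_{++}$ for all $l$, and take $(\bm{x},\bm{y})\in\partial^w(P)$.
\begin{enumerate}[label=(\roman*)]
\item Some constraint of \eqref{P} is tight at $(\bm{x},\bm{y})$. Otherwise all $L$ inequalities are strict. Then $(\bm{x}-\varepsilon\bm{1}_m,\bm{y}+\varepsilon\bm{1}_s)$ still satisfies them for small $\varepsilon>0$, so it lies in $P$ and strictly dominates $(\bm{x},\bm{y})$. This contradicts weak efficiency.
\item Let $l$ be a tight index, so $\bm{v}^l\bm{x}-\bm{u}^l\bm{y}=\sigma^l$. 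Take any $(\bm{x}',\bm{y}')$ with $(\bm{x},-\bm{y})\ge(\bm{x}',-\bm{y}')$ and $(\bm{x},-\bm{y})\neq(\bm{x}',-\bm{y}')$.
\item Strict positivity of $(\bm{v}^l,\bm{u}^l)$ gives $\bm{v}^l\bm{x}'-\bm{u}^l\bm{y}'<\bm{v}^l\bm{x}-\bm{u}^l\bm{y}=\sigma^l$. Hence $(\bm{x}',\bm{y}')\notin P$, and so $(\bm{x},\bm{y})\in\partial^s(P)$.
\end{enumerate}

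\emph{Necessity.} We argue by contraposition. Suppose some facet $l$ has a zero component, say $v^l_i=0$; the case $u^l_r=0$ is symmetric, moving $y_r$ upward instead.
\begin{enumerate}[label=(\roman*)]
\item Since constraint $l$ defines a facet, pick a point $(\bm{x},\bm{y})$ in the relative interior of that facet. At this point constraint $l$ is tight and every other constraint $l'\neq l$ is strict.
\item The point is weakly efficient. Any $(\bm{x}',\bm{y}')$ with $(\bm{x},-\bm{y})>(\bm{x}',-\bm{y}')$ satisfies $\bm{v}^l\bm{x}'-\bm{u}^l\bm{y}'<\sigma^l$. This uses only that $(\bm{v}^l,\bm{u}^l)\in\bar{\bbR}^{m+s}_+$ is nonzero and nonnegative, together with the strict componentwise decrease. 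So $(\bm{x}',\bm{y}')\notin P$.
\item The point is not strongly efficient. Consider $(\bm{x}-\varepsilon\bm{e}_i,\bm{y})$ for small $\varepsilon>0$.
\begin{itemize}
\item Constraint $l$ is unchanged because $v^l_i=0$.
\item The constraints $l'\neq l$ remain strict for $\varepsilon$ small, by continuity.
\end{itemize}
So this point lies in $P$, is dominated by $(\bm{x},\bm{y})$, and differs from it. Hence $(\bm{x},\bm{y})\in\partial^w(P)\setminus\partial^s(P)$.
\end{enumerate}

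The main obstacle is justifying the existence of a point where only constraint $l$ is tight. For this I would use the standard fact about irredundant facet descriptions: for each facet inequality there is a point of $P$ satisfying it with equality and all others strictly. The paper asserts that each $(\bm{v}^l,\bm{u}^l)$ is a facet normal, so \eqref{P} is such a description. If full-dimensionality is needed, it holds because $P$ contains $(\bm{x}_j+\bm{\delta},\bm{y}_j-\bm{\eta})$ for all $\bm{\delta}\in\bbR^m_+$ and $\bm{\eta}\in\bbR^s_+$, which gives $P$ nonempty interior.
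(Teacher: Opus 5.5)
Your proof is correct and takes essentially the same route as the paper. For sufficiency, any weakly efficient point lies on some facet, and the strictly positive normal $(\bm{v}^l,\bm{u}^l)$ forces every nonzero dominating perturbation to violate that facet's inequality; for necessity, a zero component of $(\bm{v}^l,\bm{u}^l)$ lets a point of that facet be improved in the corresponding coordinate while staying in $P$, exactly as you set it up. Your explicit choice of a relative-interior point of the facet, justified by full-dimensionality and irredundancy of the facet description of $P$, supplies a detail the paper's converse leaves implicit: that constraint $l$, and no other, governs feasibility of small perturbations from that point.
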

\begin{proof}
	Since the set $P$ has $L$ facets, say $F^1,\ldots,F^L$, we have $\partial^w(P)=\cup_{l=1}^L F^l$ and $F^l\not=\emptyset$ for all $l=1,\ldots,L$.
It follows from the normal direction $(\bm{v}^l,\bm{u}^l)$ of the facet  $F^l\subseteq \partial^w(P)$ that 
$
F^l = \left\{ (\bm{x},\bm{y})\in P \,|\, \bm{v}^l\bm{x}-\bm{u}^l\bm{y}-\sigma^l=0 \right\}.
$
\par 
Considering any $(\bm{x},\bm{y})\in \partial^w(P)$,
there exists a facet $F^{l'}$ satisfying   $(\bm{x},\bm{y})\in F^{l'}$. 
For all $l=1,\ldots,L$,   $(\bm{v}^l,\bm{u}^l)\in \bbR^{m+s}_{++}$ 
if and only if  
\begin{align}
	0<&\min\left\{ \bm{v}^l\bm{\epsilon}^-+\bm{u}^l\bm{\epsilon}^+ |\, 
		\bm{1}_m^\top\bm{\epsilon}^-+\bm{1}_s^\top\bm{\epsilon}^+=1, \bm{\epsilon}^-\geq \bm{0},  \bm{\epsilon}^+\geq \bm{0} 
\right\} \label{ieq:positiveVU}\\
	=&\max\left\{ \delta |\, \delta \bm{1}_m^\top \leq \bm{v}^l,\, \delta \bm{1}_s^\top \leq \bm{u}^l \right\},\notag
\end{align}
where the last equality holds from strong duality.
Therefore,   it follows from
$(\bm{x},\bm{y})\in F^{l'}$, $(\bm{v}^{l'},\bm{u}^{l'})\in \bbR^{m+s}_{++}$
and~\eqref{ieq:positiveVU} that
\begin{align*}
\bm{v}^{l'}(\bm{x}-\bm{\epsilon}^-)-\bm{u}^{l'}(\bm{y}+\bm{\epsilon}^+)-\sigma^{l'}=0 -(\bm{v}^{l'}\bm{\epsilon}^-+\bm{u}^{l'}\bm{\epsilon}^+)<0
\mbox{  for any } (\bm{\epsilon}^-,\bm{\epsilon}^+)\in \bar{\bbR}^{m+s}_{+}.
\end{align*}
That is,
$(\bm{x}-\bm{\epsilon}^-,\bm{y}+\bm{\epsilon}^+)\not\in P$  
for any $(\bm{\epsilon}^-,\bm{\epsilon}^+)\in \bar{\bbR}^{m+s}_{+}$.
Therefore, we have  $(\bm{x},\bm{y})\in \partial^s(P)$, and this leads to $\partial^w(P)\subseteq \partial^s(P)$.
Since~\eqref{partialS} and~\eqref{partialW} imply $\partial^s(P) \subseteq \partial^w(P)$, 
we have $\partial^s(P) = \partial^w(P)$ if
$(\bm{v}^l,\bm{u}^l)\in \bbR^{m+s}_{++}$ for all $l=1,\ldots,L$.
\par
Conversely, let  $(\bm{x}^l,\bm{y}^l)\in \partial^w(P)\cap F^l$ for all $l=1,\ldots,L$.
Then, it follows from $(\bm{x}^l,\bm{y}^l) \in \partial^w(P) = \partial^s(P)$  that 
\begin{align*}
& \bm{v}^{l}(\bm{x}^l-\bm{\epsilon}^-)-\bm{u}^{l}(\bm{y}^l+\bm{\epsilon}^+)-\sigma^{l}=0 -(\bm{v}^{l}\bm{\epsilon}^-+\bm{u}^{l}\bm{\epsilon}^+)<0
\mbox{  for any } (\bm{\epsilon}^-,\bm{\epsilon}^+)\in \bar{\bbR}^{m+s}_{+},
\end{align*} 
Since this is equivalent to~\eqref{ieq:positiveVU} from the former assertion,
we have  $(\bm{v}^l,\bm{u}^l)\in \bbR^{m+s}_{++}$.
\end{proof}


\begin{lemma}
  \label{lem:hyperplane}
  For any {\rm PPS} $\Tvrs\in{\cal T}$, 
  assume that $\partial^s(P)=\partial^w(P)$.
  Then, the following conditions are valid:
  \begin{enumerate}[label={\rm (\alph*)}]
    \item \label{hyper.a}
    $\left(\bm{x},\bm{y}\right)\in
    \partial^s\left(T_{\mbox{\tiny VRS-TO}}\right)$
    if and only if there exists $\bar{l}\in\{1,\dots,L\}$
    such that
    $\bm{v}^{\bar{l}}\bm{x}-\bm{u}^{\bar{l}}
    \bm{y}-$ $\sigma^{\bar{l}}=0;
    $
    \item \label{hyper.b}
    $\left(\bm{x},\bm{y}\right)\in
      \partial^w\left(T_{\mbox{\tiny VRS-TO}}\right)
      \setminus  \partial^s\left(T_{\mbox{\tiny VRS-TO}}\right)$
      if and only if
      \[
      \begin{cases}
        x_{\bar{i}}=0\quad \exists \bar{i}\in \left\{1,\ldots,m \right\};\\
        \bm{v}^l\bm{x}-\bm{u}^l \bm{y}-\sigma^l>0\quad \forall
        l\in\{1,\ldots,L\}.
      \end{cases}
      \]
  \end{enumerate}
\end{lemma}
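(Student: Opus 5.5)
Throughout I use Lemma~\ref{lem:positive.vandu}: the hypothesis $\partial^s(P)=\partial^w(P)$ means $(\bm{v}^l,\bm{u}^l)\in\bbR^{m+s}_{++}$ for every $l$. I also use $\Tvrs=P\cap(\bbR^m_+\times\bar{\bbR}^s_+)$. The plan is to characterise membership in the two frontiers of $\Tvrs$ by which constraints are tight. These are the facet inequalities of $P$ and the sign constraints $\bm{x}\ge\bm{0}$. The constraint $\bm{y}\neq\bm{0}$ never blocks improvement, since we move $\bm{y}$ upward.

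\emph{Part \ref{hyper.a}, sufficiency.} Suppose $\bm{v}^{\bar l}\bm{x}-\bm{u}^{\bar l}\bm{y}-\sigma^{\bar l}=0$ and $(\bm{x},\bm{y})\in\Tvrs$. Take any $(\bm{x}',\bm{y}')$ with $(\bm{x}',-\bm{y}')\le(\bm{x},-\bm{y})$ and not equal. Positivity of $(\bm{v}^{\bar l},\bm{u}^{\bar l})$ gives $\bm{v}^{\bar l}\bm{x}'-\bm{u}^{\bar l}\bm{y}'-\sigma^{\bar l}<0$, exactly as in the proof of Lemma~\ref{lem:positive.vandu}. Hence $(\bm{x}',\bm{y}')\notin P\supseteq\Tvrs$, and so $(\bm{x},\bm{y})\in\partial^s(\Tvrs)$.

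\emph{Part \ref{hyper.a}, necessity.} I argue by contraposition. Suppose all facet inequalities are strict at $(\bm{x},\bm{y})$. Then for small $\epsilon>0$ the point $(\bm{x},\bm{y}+\epsilon\bm{e}_1)$ still satisfies every strict inequality, by finiteness of $L$. It also lies in $\bbR^m_+\times\bar{\bbR}^s_+$, since increasing $\bm{y}$ keeps it nonzero and nonnegative. This point dominates $(\bm{x},\bm{y})$, which contradicts strong efficiency. The key point is that increasing an output never hits the orthant boundary, so strong efficiency forces a tight facet.

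\emph{Part \ref{hyper.b}, ``if''.} Assume $x_{\bar i}=0$ and all facet inequalities are strict. Part \ref{hyper.a} then gives $(\bm{x},\bm{y})\notin\partial^s(\Tvrs)$. For weak efficiency, any $(\bm{x}',\bm{y}')$ with $(\bm{x}',-\bm{y}')<(\bm{x},-\bm{y})$ has $x'_{\bar i}<0$, so it lies outside $\Tvrs$.

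\emph{Part \ref{hyper.b}, ``only if''.} Take $(\bm{x},\bm{y})\in\partial^w\setminus\partial^s$. By part \ref{hyper.a}, all facet inequalities are strict. Suppose for contradiction that $\bm{x}>\bm{0}$. Then $(\bm{x}-\epsilon\bm{1}_m,\bm{y}+\epsilon\bm{1}_s)$ satisfies all strict facet inequalities for small $\epsilon$ and stays in $\bbR^m_+\times\bar{\bbR}^s_+$. This point strictly dominates $(\bm{x},\bm{y})$, contradicting weak efficiency. Hence some $x_{\bar i}=0$.

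The main subtlety is the necessity direction of \ref{hyper.a}. One must check that perturbing only outputs stays inside $\Tvrs$, in particular respecting $\bar{\bbR}^s_+$. This is immediate because outputs only increase. The rest is finite-dimensional openness of strict inequalities.
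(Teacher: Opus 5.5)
Your proof is correct, but it takes a different route from the paper's.

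\textbf{The paper's route.} It argues through LP duality. It builds a chain of optimal values: additive slack maximization over $P$ and over $\Tvrs$, output-only slack maximization over $\Tvrs$, and the equal-step problem $\max\{\delta \mid (\bm{x}-\delta\bm{1}_m,\bm{y}+\delta\bm{1}_s)\in P,\ \delta\ge 0\}$, each paired with its dual. From this chain it deduces $\partial^s(\Tvrs)=\partial^s(P)\cap(\bbR^m_+\times\bar{\bbR}^s_+)$ and obtains (a) from $\partial^w(P)=\bigcup_l F^l$. For (b) it takes an optimal dual solution of the equal-step problem over $\Tvrs$. It shows the facet part $(\bm{v}^*,\bm{u}^*,\sigma^*)$ must vanish, and reads off a multiplier $w^*_{\bar i}>0$ on the constraint $x_{\bar i}\ge 0$, which forces $x_{\bar i}=0$.

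\textbf{Your route.} You stay in the primal. Positivity of the facet normals (Lemma~\ref{lem:positive.vandu}) gives ``tight facet $\Rightarrow$ strongly efficient''. The converse comes from openness of finitely many strict inequalities, together with the fact that raising one output never leaves $\bbR^m_+\times\bar{\bbR}^s_+$. Part (b) then reduces to a one-line perturbation along $(-\bm{1}_m,\bm{1}_s)$, plus the observation that any strictly dominating point has $x'_{\bar i}<0$.

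\textbf{What each buys.} Your argument is shorter and proves both directions of (b) explicitly. It also isolates exactly where $\partial^s(P)=\partial^w(P)$ is used: only in ``tight facet $\Rightarrow$ strongly efficient'', and through it in the ``only if'' half of (b). The other implications in your proof hold for any facet description of $P$. The paper's argument is heavier, but it expresses frontier membership through LP optimal values and dual certificates; the multiplier $\bm{w}^*$ identifies which zero input blocks improvement. That fits the paper's LP-based checking theme in Section~\ref{sec:Comp}.

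\textbf{Minor remark.} The statement only makes sense for $(\bm{x},\bm{y})\in\Tvrs$. In the ``if'' half of (b), the listed conditions alone give membership in $P$ but not $\bm{x}\ge\bm{0}$ or $\bm{y}\in\bar{\bbR}^s_+$. You use this standing assumption tacitly there, as the paper does.
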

\begin{proof}
	Since $(\bm{x},\bm{y})\in P\cap(\bbR^m_+\times \bar{\bbR}^s_+)$ is equivalent to
	$(\bm{x},\bm{y})\in \Tvrs$, it follows from the duality theorem of LP that 
\begin{align}
 & \max\left\{\left.
		 \bm{1}_m^\top \bm{\epsilon}^- + \frac{1}{s}\bm{1}_s^\top\bm{\epsilon}^+\, \right|\, (\bm{x}-\bm{\epsilon}^-,\bm{y}+\bm{\epsilon}^+) \in P, (\bm{\epsilon^-},\bm{\epsilon}^+)\in \bbR^{m+s}_+  
\right\} \label{eqTop} \\
 &\geq \max\left\{\left.
		 \bm{1}_m^\top \bm{\epsilon}^- + \frac{1}{s} \bm{1}_s^\top \bm{\epsilon}^+\, \right|\, (\bm{x}-\bm{\epsilon}^-,\bm{y}+\bm{\epsilon}^+) \in \Tvrs, (\bm{\epsilon^-},\bm{\epsilon}^+)\in \bbR^{m+s}_+  
\right\} \label{eqSecond} \\
 & \geq  \max\left\{\left. \frac{1}{s} \bm{1}_s^\top \bm{\epsilon}^+\, \right| (\bm{x}-\bm{\epsilon}^-,\bm{y}+\bm{\epsilon}^+) \in \Tvrs, (\bm{\epsilon^-},\bm{\epsilon}^+)\in \bbR^{m+s}_+ 
\right\} \notag \\
&=\min\left\{(\bm{v}+\bm{w})\bm{x}-\bm{u}\bm{y}-\sigma   \left| 
\begin{array}{l}
\bm{v}\bm{x}_j -\bm{u}\bm{y}_j -\sigma \geq 0 \, (j=1,\ldots,n), \bm{v}R^--\bm{u}R^+ \geq \bm{0}, \\
\bm{v}+\bm{w}\geq \bm{0},\\
\bm{u} \geq \frac{1}{s} \bm{1}_s^\top, \bm{v}\geq \bm{0}, \bm{w}\geq \bm{0}, \bm{u} \geq \bm{0}
\end{array}
\right. 
\right\}\notag\\
&\geq \min\left\{\bm{v}\bm{x}-\bm{u}\bm{y}-\sigma   \left| 
\begin{array}{l}
\bm{v}\bm{x}_j -\bm{u}\bm{y}_j -\sigma \geq 0 \, (j=1,\ldots,n), \bm{v}R^--\bm{u}R^+ \geq \bm{0}, \\
\bm{u} \geq\frac{1}{s}  \bm{1}_s^\top,\bm{v}\geq \bm{0}, \bm{u} \geq \bm{0}
\end{array}
\right. 
\right\} \notag\\ 
&\geq \min\left\{\bm{v}\bm{x}-\bm{u}\bm{y}-\sigma   \left| 
\begin{array}{l}
\bm{v}\bm{x}_j -\bm{u}\bm{y}_j -\sigma \geq 0 \, (j=1,\ldots,n),\bm{v}R^--\bm{u}R^+ \geq \bm{0}, \\
\bm{v}\bm{1}_m+ \bm{u} \bm{1}_s\geq 1, \bm{v}\geq \bm{0}, \bm{u} \geq \bm{0}
\end{array}
\right. 
\right\}\label{eq3rd}\\
&= \max\left\{ \delta  \left| (\bm{x}-\delta \bm{1}_m,\bm{y}+\delta\bm{1}_s) \in P,\ \delta \geq 0 \right. 
\right\}\geq 0. \label{eqBottom} 
\end{align}
\par
For any  $(\bm{x},\bm{y})\in \partial^s(P)$ with $(\bm{x},\bm{y})\in \bbR^m_+\times \bar{\bbR}^s_+$, both~\eqref{eqTop} and~\eqref{eqSecond}  have the optimal value $0$.
Hence, we have 
 $ \left\{ (\bm{x},\bm{y})\,
\left| \,
(\bm{x},\bm{y})\in \bbR^m_+\times \bar{\bbR}^s_+,\ (\bm{x},\bm{y})\in \partial^s(P)
\right.
\right\} \subseteq \partial^s(\Tvrs).$
\par
Assume  $\partial^w(P)=\partial^s(P)$, and suppose  any $(\bm{x},\bm{y})\in \partial^s(\Tvrs)$. Then,  both~\eqref{eqSecond} and~\eqref{eqBottom} have the optimal value $0$,  
Hence,   $\partial^s(\Tvrs) \subseteq \partial^w(P)\cap (\bbR^m_+\times \bar{\bbR}^s_+)
=  \partial^s(P)\cap (\bbR^m_+\times \bar{\bbR}^s_+). $ 
Therefore, we have $\partial^s(\Tvrs) = \partial^s(P)\cap (\bbR^m_+\times \bar{\bbR}^s_+)$, 
which means from  $\partial^w(P)=\cup_{l=1}^L \{(\bm{x},\bm{y})\in P \,|\, \bm{v}^l\bm{x}-\bm{u}^l\bm{y}-\sigma^l=0 \}$ that 
\ref{hyper.a} is valid.
\par
Moreover, $(\bm{x},\bm{y})\in  \partial^w(\Tvrs) \setminus \partial^s(\Tvrs)$ if and only if 
\begin{align}
& (\bm{x},\bm{y})\in  \partial^w(\Tvrs) \setminus \partial^s(P) \label{eqConb} \\
\iff  & (\bm{x},\bm{y})\in  \partial^w(\Tvrs) \setminus \partial^w(P)\notag \\
\iff&
 0= \max\left\{ \delta  \left| (\bm{x}-\delta \bm{1}_m,\bm{y}+\delta\bm{1}_s) \in \Tvrs, \, \delta\geq 0 \right. 
\right\} \label{eqPWT.dual}\\
&\mbox{ and  the optimal value of~\eqref{eqBottom}  is positive. }\notag\\
\iff& 0=\min\left\{(\bm{v}+\bm{w})\bm{x}-\bm{u}\bm{y}-\sigma   \left| 
\begin{array}{l}
\bm{v}\bm{x}_j -\bm{u}\bm{y}_j -\sigma \geq 0 \,
 (j=1,\ldots,n),\\
\bm{v}R^--\bm{u}R^+ \geq \bm{0},\, 
\bm{v}\bm{1}_m+\bm{w}\bm{1}_m +  \bm{u}\bm{1}_s\geq  1\\
\bm{v}\geq \bm{0}, \bm{w}\geq \bm{0}, \bm{u} \geq \bm{0}
\end{array}
\right. 
\right\}\label{eqPWT} \\
&\mbox{ and  the optimal value of~\eqref{eq3rd}  is positive, }\notag
\end{align}
where  the minimization problem~\eqref{eqPWT} is  the dual problem of~\eqref{eqPWT.dual}.
For any $(\bm{v},\bm{u},\sigma)$ satisfying   $
\bm{v}\bm{x}_j -\bm{u}\bm{y}_j -\sigma \geq 0$ for $j=1,\ldots,n$, 
$\bm{v}R^--\bm{u}R^+ \geq \bm{0}$, and $(\bm{v}, \bm{u})\in \bar{\bbR}^{m+s}_+$, 
it follows  from the positive optimal value of~\eqref{eq3rd} that $\bm{v}\bm{x}-\bm{u}\bm{y}-\sigma>0$. 
This means from the optimal value $0$ of \eqref{eqPWT} that  an optimal solution $(\bm{v}^*,\bm{u}^*,\bm{w}^*,\sigma^*)$ to~\eqref{eqPWT}  satisfies $(\bm{v}^*,\bm{u}^*)=(\bm{0},\bm{0})$ and $\sigma^*=0$.
Moreover, we have $\bm{w}^*\bm{x}=0$ and $\bm{w}^*\in \bar{\bbR}^m_+$; that is, there exists $\bar{i}\in \{1,\ldots,m\}$ such that $w^*_{\bar{i}}>0$ and $x_{\bar{i}}=0$.  
This means from~\eqref{eqConb} that~\ref{hyper.b} is valid.
\end{proof}

The efficiency measure is a mapping $F:T\times {\cal T} \to [0,1]$.
Consider the following three types of axioms for efficiency measures
defined on the full space of inputs and outputs.
These are extensions of the axioms proposed by
\cite{fare1978measuring} for input-oriented measures of 
efficiency, and we consider a weaker concept of monotonicity as well.
\begin{description}
\item{\bf Indication of Efficiency (I):}
	For a PPS $T\in {\cal T}$ and  all $(\bm{x},\bm{y})\in {T}$, $F(\bm{x},\bm{y}; {T})=1$ if and only if $(\bm{x},\bm{y})\in \partial^s({T})$. 
\item{\bf Strong monotonicity (SM):}
	For a PPS $T\in {\cal T}$ and all pairs  $(\bm{x},\bm{y})\in {T}$ and  $(\bm{x}',\bm{y}')\in {T}$ satisfying  $(\bm{x},-\bm{y}) \leq  (\bm{x}',-\bm{y}')$ and 
$(\bm{x},-\bm{y}) \not=  (\bm{x}',-\bm{y}')$, $F(\bm{x},\bm{y};{T}) > F(\bm{x}',\bm{y}';{T}).$
\item{\bf Weak monotonicity (WM):}
	For a PPS $T\in {\cal T}$ and all pairs  $(\bm{x},\bm{y})\in {T}$ and  $(\bm{x}',\bm{y}')\in {T}$ satisfying  $(\bm{x},-\bm{y}) \leq  (\bm{x}',-\bm{y}')$, $F (\bm{x},\bm{y};{T}) \geq F(\bm{x}',\bm{y}';{T}).$   
\end{description}
\section{Russell Graph Measure DEA Model}\label{sec:model}

In this section, we review existing studies on the RGM and clarify the motivation for our proposed model.


The Russell graph measure (RGM)~\cite{fare1985measurement} for the assessed input--output vector $(\bm{x},\bm{y})\in T_{\mbox{\tiny VRS-TO}}\cap \left( \bbR^{m}_{++}\times  \bbR^{s}_{++} \right)
$ is given by the optimal value of
\begin{eqnarray}
 \min        &&  f(\bm{\theta},\bm{\phi}):=
 \frac{1}{m+s}\left( \sum_{i=1}^m \theta_i + \sum_{r=1}^s \frac{1}{\phi_{r}} \right) \label{P0}\\
 \mbox{s.t.} &&  \sum_{j=1}^n \lambda_j {x}_{ij} +\sum_{t=1}^K  \pi_{t} r^-_{it}  \leq  \theta_i {x_i},  \ i=1,\ldots,m,  \label{P1}\\
  &&  \sum_{j=1}^n \lambda_j {y}_{rj} + \sum_{t=1}^K \pi_t {r}^+_{rt} \geq  \phi_r {y}_r,  \ r=1,\ldots,s, \label{P2}\\
  && \sum_{j=1}^n \lambda_j =1, \label{P3} \\
  && \lambda_j \geq 0,\ j=1,\ldots,n, \  \pi_t \geq 0,\ t=1,\ldots,K, \label{P4}  \\
  && 0\leq \theta_i\leq 1, \ i=1,\ldots,m, \ 1 \leq \phi_r, \ r=1,\ldots,s. \label{P5}        
\end{eqnarray} 
Denote by $H(\bm{x},\bm{y}; T_{\mbox{\tiny VRS-TO}})$ the optimal value of the minimization problem~\eqref{P0}--\eqref{P5}.
Let $\bm{\theta}\otimes\bm{x}$ and $\bm{\phi}\otimes\bm{y}$ denote $(\theta_1x_1\ldots,\theta_mx_m)$ and
$(\phi_1y_1,\ldots,\phi_sy_s)$, respectively.
Then, the minimization problem~\eqref{P0}--\eqref{P5} is formulated as 
\begin{eqnarray}
 \min        &&  f(\bm{\theta},\bm{\phi})  \label{P6}\\
 \mbox{s.t.} && (\bm{\theta}\otimes\bm{x},\bm{\phi}\otimes\bm{y}) \in T_{\mbox{\tiny VRS-TO}},~\eqref{P5} \label{P8}
\end{eqnarray}   
Let $(\bm{\theta}^*,\bm{\phi}^*)$ be an optimal solution to~\eqref{P6}--\eqref{P8}.
Then, we have
\begin{align}
  (\bm{\theta}^*\otimes\bm{x},\bm{\phi}^*\otimes\bm{y}) \in \partial^s(T_{\mbox{\tiny VRS-TO}})
\end{align}
since  $f(\bm{\theta}',\bm{\phi}')<f(\bm{\theta},\bm{\phi})$ for all
$(\bm{\theta}',-\bm{\phi}')\le (\bm{\theta},-\bm{\phi})$ with 
$(\bm{\theta}',-\bm{\phi}') \not=  (\bm{\theta},-\bm{\phi})$,
and  for any  $(\bm{\theta}\otimes\bm{x},\bm{\phi}\otimes\bm{y}) \in T_{\mbox{\tiny VRS-TO}} \setminus \partial^s(T_{\mbox{\tiny VRS-TO}})$,
there exists $(\bm{\theta}'\otimes\bm{x},\bm{\phi}'\otimes\bm{y}) \in \partial^s(T_{\mbox{\tiny VRS-TO}})$ satisfying
$(\bm{\theta}'\otimes\bm{x},-\bm{\phi}'\otimes\bm{y}) \leq (\bm{\theta}\otimes\bm{x},-\bm{\phi}\otimes\bm{y})$.
Therefore, the minimization problem~\eqref{P6}--\eqref{P8} is equivalently reduced to 
 \begin{align}
 \min\left\{\,  f(\bm{\theta},\bm{\phi}) \,\left|\,
  (\bm{\theta}\otimes\bm{x},\bm{\phi}\otimes\bm{y}) \in \partial^s(T_{\mbox{\tiny VRS-TO}}), \eqref{P5}
 \right\}.\right.\label{minRM}
 \end{align}

As stated by~\cite{levkoff2012boundary,Sekitani02032025}, the minimization problem~\eqref{P0}--\eqref{P5} has
boundary problems that occur in observed output data containing zero $y_r=0$ and target input data with zero $\theta_i^*x_i=0$.
When $y_r=0$ for some $r \in \{1,\ldots,s\}$,
Levkoff et al.~\cite{levkoff2012boundary} showed that the minimization problem~\eqref{P0}--\eqref{P5} has  no optimal solution, and hence
$H(\bm{x},\bm{y}; T_{\mbox{\tiny VRS-TO}})$ is not well-defined.
Moreover,  Levkoff et al.~\cite[Theorem~1]{levkoff2012boundary} showed that the modification of  $H(\cdot,\cdot; T_{\mbox{\tiny VRS-TO}})$ for $y_r=0$  fails to satisfy (I) and (WM).  
When $\theta_i^*x_i=0$ and $x_i>0$, Sekitani and Zhao~\cite{Sekitani02032025} showed  that there exists $(\bm{x},\bm{y}) \in \Tvrs\cap \bbR^{m+s}_{++}$ that fails to satisfy  (SM) of $H(\bm{x},\bm{y}; T_{\mbox{\tiny VRS-TO}})$.
Moreover, Sekitani and Zhao~\cite{Sekitani02032025} reported from 
experiments with real-world DEA applications that the minimization 
problem~\eqref{P0}--\eqref{P5} often provides the optimal input vector
$\bm{\theta}^*\otimes \bm{x}=\bm{0}$  even if 
$(\bm{x},\bm{y})\in \bbR^{m+s}_{++}$, which is referred to as the
\emph{free lunch issue}.
Hence, the optimal input--output vector $(\bm{\theta}^*\otimes \bm{x}, \bm{\phi}^*\otimes \bm{y})\in T_{\mbox{\tiny VRS-TO}}$ with  $\bm{\theta}^*\otimes \bm{x}=\bm{0}$  and $\bm{\phi}^*\otimes \bm{y} \in 
\bar{\bbR}^{s}_{+}$ 
contradicts the no free lunch production axiom.  
The minimization problem~\eqref{P0}--\eqref{P5} may provide an unrealistic target for  $(\bm{x},\bm{y})\in T_{\mbox{\tiny VRS-TO}}\cap \bbR^{m+s}_{++}$.
To avoid the free lunch issue, Sekitani and Zhao~\cite{Sekitani02032025} modified the minimization problem~\eqref{minRM} into the following maximization problem:
For any $(\bm{x},\bm{y})\in T_{\mbox{\tiny VRS-TO}}\cap \bbR^{m+s}_{++}$,
 \begin{align}
 \max\left\{\,  f(\bm{\theta},\bm{\phi}) \,\left|\,
  (\bm{\theta}\otimes\bm{x},\bm{\phi}\otimes\bm{y}) \in \partial^s(T_{\mbox{\tiny VRS-TO}}), \eqref{P5}
 \right\}.\right.\label{maxRM}
 \end{align}
 However, since they assumed positive inputs and positive outputs, 
 they did not overcome the boundary problem.
\par
Let 
\begin{align}
	T_{\mbox{\tiny CRS}}:=
	\left\{ 
		\left(\bm{x},\bm{y}\right) 
		\begin{array}{|l}
			\sum_{j=1}^n \lambda_j \bm{x}_j  \leq \bm{x}, \\
			\sum_{j=1}^n \lambda_j \bm{y}_j  \geq \bm{y}, \\
			\lambda_j \geq 0, \ j=1,\ldots,n \\
		\end{array}
	\right\}
	\cap  \left( \bar{\bbR}^{m}_{+} \times \bbR^{s}_{+} \right).   \label{CRSPPS}
\end{align}

\par
F{\"a}re and Lovell~\cite{fare1978measuring} proposed an input-oriented efficiency measure:
For any $(\bm{x},\bm{y})\in\Tcrs$,
\begin{align}
&\Efgl(\bm{x},\bm{y};T_{\mbox{\tiny CRS}}) := \nonumber\\
&\min\left\{\left. 
 \frac{\sum_{i=1}^m  \delta(x_i)\theta_i + \sum_{r=1}^s \delta(y_r)\cdot 1/\phi_r }{ \sum_{i=1}^m  \delta(x_i) + \sum_{r=1}^s \delta(y_r)  }
 \right| 
   (\bm{\theta}\otimes\bm{x},\bm{\phi}\otimes\bm{y}) \in T_{\mbox{\tiny CRS}}, \eqref{P5}\right\},\label{EFGL}
\end{align}
where 
\[
	\delta(z):=
	\begin{cases}
		0 & \text{if $z=0$};\\
		1 & \text{if $z>0$}.
	\end{cases}
\]
However, $\Efgl(\bm{x},\bm{y};T_{\mbox{\tiny CRS}})$ fails to satisfy both (I) and (WM) on the boundary of  $T_{\mbox{\tiny CRS}}$ with $y_r=0$.

Then, Levkoff et al.~\cite{levkoff2012boundary} modified the objective function of the F{\"a}re--Lovell efficiency measure 
$\Efgl(\bm{x},\bm{y};T_{\mbox{\tiny CRS}})$ as follows:
For any $(\bm{x},\bm{y})\in T_{\mbox{\tiny CRS}}$,
\begin{align}\label{barEFGL}
\barEfgl(\bm{x},\bm{y};T_{\mbox{\tiny CRS}})  &:= \inf\left\{\left. \bar{f}(\bm{\theta},\bm{\phi}) 
 \right| 
  (\bm{\theta}\otimes\bm{x},\bm{\phi}\otimes\bm{y}) \in T_{\mbox{\tiny CRS}}, \eqref{P5}
 \right\},
\end{align} 
where
\begin{align}
\bar{f}(\bm{\theta},\bm{\phi}) := 
 \frac{\sum_{i=1}^m  \delta(x_i)\theta_i + \sum_{r=1}^s \psi_r(\bm{x},\bm{y};T_{\mbox{\tiny CRS}})\cdot 1/\phi_r }{ \sum_{i=1}^m  \delta(x_i) + \sum_{r=1}^s \psi_r(\bm{x},\bm{y};T_{\mbox{\tiny CRS}})  },
\end{align}
and 
\[
\psi_r(\bm{x},\bm{y};T_{\mbox{\tiny CRS}}) :=
\left\{ \begin{array}{ll} 0 & \mbox{if } y_r=0 \mbox{ and } (\bm{x},\bm{y}+\epsilon\bm{e}_r) \not\in T_{\mbox{\tiny CRS}}   \mbox{ for any } \epsilon>0;
		\\ 1 & \mbox{otherwise}.
\end{array}
\right.
\]
The modified version $\barEfgl(\cdot,\cdot;T_{\mbox{\tiny CRS}})$ satisfies (I) and (WM).

From the definitions of~\eqref{EFGL} and~\eqref{barEFGL},
for each $(\bm{x},\bm{y})\in  T_{\mbox{\tiny CRS}} \cap \left( \bar{\bbR}^m_{+} \times \bbR^s_{++}\right)$, we have
\begin{align}
\barEfgl(\bm{x},\bm{y};T_{\mbox{\tiny CRS}}) =
\Efgl(\bm{x},\bm{y};T_{\mbox{\tiny CRS}}),
\end{align}
and for each $(\bm{x},\bm{y})\in  T_{\mbox{\tiny CRS}}\cap \left( \bbR^{m}_{++}\times \bbR^s_{++} \right)$,
\begin{align}
H(\bm{x},\bm{y}; T_{\mbox{\tiny CRS}})=\barEfgl(\bm{x},\bm{y};T_{\mbox{\tiny CRS}})= \Efgl(\bm{x},\bm{y};T_{\mbox{\tiny CRS}}).
\end{align}
The modified version $\barEfgl(\cdot,\cdot;T_{\mbox{\tiny VRS-TO}})$ also satisfies 
the properties (I) and (WM)  if $(\bm{0},\bm{y})\not\in T_{\mbox{\tiny VRS-TO}}$ for any $\bm{y}\in \bbR^s_{++}$. 

Although the modified efficiency measure by Levkoff et al.~\cite{levkoff2012boundary} overcomes the boundary problem,
their model fails to provide a target.
For an inefficient DMU, there may exist no feasible solution to~\eqref{barEFGL} 
that attains the minimum, or efficiency score, and hence
no target representing an improvement from the current state for the inefficient DMU can be identified.

In this study, to enhance $\barEfgl(\cdot,\cdot;T_{\mbox{\tiny VRS-TO}})$ and $\Efgl(\cdot,\cdot;T_{\mbox{\tiny VRS-TO}})$,
	we adopt the maximization model~\eqref{maxRM} by~\cite{Sekitani02032025} and overcome the boundary problem.
We show the following properties: 
\begin{itemize}
	\item The maximization problem~\eqref{maxRM} has an optimal solution for any boundary point  $(\bm{x},\bm{y})\in T_{\mbox{\tiny VRS-TO}}$;
\item An efficiency measure  defined by the optimal value of~\eqref{maxRM} satisfies (I) and (SM);
\item The optimal value of~\eqref{maxRM} can be provided by solving $(m+s)$ linear programming problems.
\end{itemize}
 
\section{Extended Max Russell Graph Measure}\label{sec:effMeasure}
In this section, we demonstrate that the efficiency measure
induced from~\eqref{maxRM} has desired properties (I) and (SM)
under $\partial^s(P)=\partial^w(P)$.
We also show that the computation of the efficiency score reduces to
solving a series of LPs.

We extend the domain $T_{\mbox{\tiny VRS-TO}}\cap \bbR^{m+s}_{++}$ of 
the corresponding efficiency measure for~\eqref{maxRM} into
$T_{\mbox{\tiny VRS-TO}}\cap \bbR^{m+s}_{+}$,
which is indeed identical  to $T_{\mbox{\tiny VRS-TO}}$.
For any $(\bm{x},\bm{y})\in T_{\mbox{\tiny VRS-TO}}$, we define 
\begin{align}
 \hat{F}(\bm{x},\bm{y};T_{\mbox{\tiny VRS-TO}}) := \mbox{  the optimal value of~\eqref{maxRM} },\label{emaxRM}
\end{align}
and we refer to $\hat{F}$ as an \emph{extended} max Russell graph measure
(max RGM).

The following lemma establishes the well-definedness of the proposed efficiency
measure $\hat F$ by showing that there exists an optimal solution to~\eqref{maxRM}
over $\Tvrs$.
\begin{lemma}\label{lemma1}
  For a {\rm PPS} $\Tvrs\in{\cal T}$,
  assume that $\partial^s(P)=\partial^w(P)$.
  Then, for any $(\bm{x},\bm{y})\in  T_{\mbox{\tiny VRS-TO}}$,
  the extended max RGM $\hat{F}(\bm{x},\bm{y};T_{\mbox{\tiny VRS-TO}})$
  is well-defined, i.e., the optimal value of~\eqref{maxRM} exists.
  Moreover, the following strict inequality holds:
  \begin{align}\label{eq26}
    \hat{F}(\bm{x},\bm{y};T_{\mbox{\tiny VRS-TO}})>1-\frac{1}{m+s};
  \end{align}
\end{lemma}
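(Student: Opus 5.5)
The plan is to exhibit one explicit feasible solution of~\eqref{maxRM} whose objective value already exceeds $1-\frac{1}{m+s}$. A compactness argument on the corresponding upper level set then shows that the supremum is attained. Throughout, the hypothesis $\partial^s(P)=\partial^w(P)$ enters through Lemma~\ref{lem:positive.vandu}, which gives $(\bm{v}^l,\bm{u}^l)\in\bbR^{m+s}_{++}$ for all $l$, and through Lemma~\ref{lem:hyperplane}\ref{hyper.a}, which characterizes $\partial^s(\Tvrs)$ as the set of points of $\Tvrs$ lying on some hyperplane $\bm{v}^l\bm{x}-\bm{u}^l\bm{y}-\sigma^l=0$.

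\emph{Step 1 (a feasible point).} Fix $(\bm{x},\bm{y})\in\Tvrs$. Since $\bm{y}\in\bar{\bbR}^s_+$, we may pick an index $\bar r$ with $y_{\bar r}>0$, and we consider the one-parameter family $(\bm{x},\bm{y}+t y_{\bar r}\bm{e}_{\bar r})$ for $t\ge 0$. For each $l$,
\[
g_l(t):=\bm{v}^l\bm{x}-\bm{u}^l\bm{y}-\sigma^l-t\,u^l_{\bar r}y_{\bar r}.
\]
Each $g_l$ is nonnegative at $t=0$ and strictly decreasing, because $u^l_{\bar r}>0$ by Lemma~\ref{lem:positive.vandu}. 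Hence
\[
t^*:=\min_l \frac{\bm{v}^l\bm{x}-\bm{u}^l\bm{y}-\sigma^l}{u^l_{\bar r}y_{\bar r}}\in[0,\infty)
\]
is finite, and at $t^*$ all $g_l\ge 0$ with equality for some $\bar l$. The resulting point is nonnegative, and its output is nonzero, so it lies in $\Tvrs$. By Lemma~\ref{lem:hyperplane}\ref{hyper.a} it therefore lies in $\partial^s(\Tvrs)$. Setting $\bm{\theta}=\bm{1}_m$, $\phi_{\bar r}=1+t^*$ and $\phi_r=1$ for $r\ne\bar r$, this point has exactly the form $(\bm{\theta}\otimes\bm{x},\bm{\phi}\otimes\bm{y})$ and satisfies~\eqref{P5}. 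Its objective value is
\[
c:=\frac{m+s-1+1/(1+t^*)}{m+s}>1-\frac{1}{m+s}.
\]
In particular the feasible set of~\eqref{maxRM} is nonempty, and the supremum is at least $c$.

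\emph{Step 2 (attainment).} It suffices to maximize over the set $S:=\{(\bm{\theta},\bm{\phi})\ \text{feasible}:\ f(\bm{\theta},\bm{\phi})\ge c\}$. Each of the $m+s$ summands of $(m+s)f$ is at most $1$. Hence on $S$ every summand, in particular every $1/\phi_r$, is at least $(m+s)c-(m+s-1)=1/(1+t^*)>0$. This gives $1\le\phi_r\le 1+t^*$, so $S$ is bounded.

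For closedness, the set $\{(\bm{\theta},\bm{\phi}):(\bm{\theta}\otimes\bm{x},\bm{\phi}\otimes\bm{y})\in\partial^s(\Tvrs)\}$ is, by Lemma~\ref{lem:hyperplane}\ref{hyper.a}, the finite union over $l$ of the sets
\[
\{(\bm{\theta}\otimes\bm{x},\bm{\phi}\otimes\bm{y})\in P,\ \bm{v}^l(\bm{\theta}\otimes\bm{x})-\bm{u}^l(\bm{\phi}\otimes\bm{y})=\sigma^l\},
\]
together with the nonnegativity and nonzero-output requirements. The nonzero-output requirement $\bm{\phi}\otimes\bm{y}\ne\bm{0}$ is automatic from $\bm{\phi}\ge\bm{1}_s$ and $\bm{y}\ne\bm{0}$. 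So the only non-closed ingredient of $\Tvrs$ disappears, and what remains is a finite union of closed polyhedra intersected with~\eqref{P5}, which is closed. Thus $S$ is compact, and since $f$ is continuous on $S$ (where $\phi_r\ge 1$), a maximizer exists. The optimal value is at least $c>1-\frac{1}{m+s}$, which proves~\eqref{eq26}.

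The main obstacle is the closedness step: one must make sure that the boundary issues, namely $x_i=0$, $y_r=0$, and the exclusion of $\bm{y}=\bm{0}$ in $\bar{\bbR}^s_+$, do not destroy closedness of the feasible set. The observation that $\bm{\phi}\ge\bm{1}_s$ forces the target output to be nonzero is exactly what resolves this. Zero components $y_r=0$ are harmless because the corresponding $\phi_r$ does not affect the target, and the lower bound $1/\phi_r\ge 1/(1+t^*)$ coming from the level-set restriction still keeps those $\phi_r$ bounded.
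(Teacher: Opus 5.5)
Your proof is correct, and Step~1 coincides with the paper's construction. Your $1+t^*$ is exactly the paper's $\phi^\natural_{\bar r}$, and frontier membership is verified the same way, through Lemma~\ref{lem:positive.vandu} and Lemma~\ref{lem:hyperplane}\ref{hyper.a}.

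The attainment step is where you take a different route. The paper bounds the feasible region through the constraints. From the facet inequalities together with $\bm{\theta}\le\bm{1}_m$ and $\bm{\phi}\ge\bm{1}_s$ it derives $\phi_r\le\phi^\natural_r$ for every $r\in I^+(\bm{y})$, which is why it defines $\phi^\natural_r$ for all such $r$ rather than one. It then fixes $\theta_i=1$ where $x_i=0$ and $\phi_r=1$ where $y_r=0$ (the latter coordinates being otherwise unbounded), since these choices do not move the target and can only increase $f$. The resulting reduced problem~\eqref{OPTprob} is reused later in the proof of Theorem~\ref{thm:closest}.

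You instead bound through the objective. On the upper level set $\{f\ge c\}$, every summand of $(m+s)f$ is at least $1/(1+t^*)$. This caps all $\phi_r$ uniformly, including those with $y_r=0$, so no reduction is needed and a single index $\bar r$ suffices. As a by-product you get $\theta_i\ge 1/(1+t^*)>0$ at any maximizer, which is essentially the content of Theorem~\ref{Theorem2}.

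Your explicit closedness argument also fills a gap in the paper's presentation. You write the feasible set as a finite union of closed polyhedral pieces, and the non-closed requirement of a nonzero output is made vacuous by $\bm{\phi}\ge\bm{1}_s$ and $\bm{y}\neq\bm{0}$. The paper simply asserts this when it calls the feasible region of~\eqref{OPTprob} compact.
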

\begin{proof}
Let
  \begin{align}\label{eq:nonzero.indices}
    \begin{gathered}
    I^+(\bm{x}):=\left\{\,i\mid x_i>0,\ i=1,\dots,m \right\} \text{ and } 
    I^+(\bm{y}):=\left\{\,r\mid y_r>0,\ r=1,\dots,s \right\}.
    \end{gathered}
  \end{align}
  By the assumption $\partial^s(P)=\partial^w(P)$, it follows from Lemma~\ref{lem:positive.vandu} that  
  $(\bm{v}^l,\bm{u}^l)\in\bbR^{m+s}_{++}$ for all $l=1,\ldots,L$.
  Let $(\bm{x},\bm{y})\in \Tvrs$ and 
  \[
  \phi^{\natural}_r := \min_{l=1,\ldots,L} \frac{\bm{v}^l\bm{x}-\sum_{q\not=r} u^l_q y_q - \sigma^l}{u^l_ry_r}\quad 
	\text{for all $r\in I^+(\bm{y})$}.
  \]
  Since $\bm{v}^l\bm{x}-\sum_{q\neq r}u^l_qy_q -\sigma^l\ge u^l_ry_r$ 
  for all $l=1,\dots,L$,  we have $\phi^\natural_r \ge 1$.
  By the definition of $\phi^\natural_r$ for all $r\in I^+(\bm{y})$,
  \begin{align*}
    \phi^\natural_r\le
    \frac{\bm{v}^l\bm{x}-\sum_{q\not=r} u^l_q y_q - \sigma^l}{u^l_ry_r}
    \quad \mbox{ for all }  l=1,\dots,L,
  \end{align*}
	and then a simple calculation leads to
  \begin{align*}
    & \bm{v}^l\bm{x}-\sum_{q\not=r} u^l_q y_q - \sigma^l\ge
    \phi^\natural_r u^l_ry_r \quad   \mbox{ for all }  l=1,\dots,L, \\
    \iff & \bm{v}^l(\bm{1}_m\otimes\bm{x})-
    \bm{u}^l\left(((\phi^\natural_r-1)\bm{e}_r+\bm{1}_s)\otimes\bm{y}\right)
    -\sigma^l \ge 0\quad \mbox{ for all }  l=1,\dots,L.
  \end{align*}
	Let $\bm{e}_p$ be a unit vector whose $p$th component is $1$. 
  By the definition of $\phi^\natural_r$, there exists $\bar{l}\in\{1,\dots,L\}$
  such that
  \begin{align}\label{eq:frontier}
  \bm{v}^{\bar{l}}(\bm{1}_m\otimes\bm{x})-
	\bm{u}^{\bar{l}}\left(((\phi^{\natural}_r-1)\bm{e}_r+\bm{1}_s)
  \otimes \bm{y}\right)-\sigma^{\bar{l}} = 0.
  \end{align}
  It follows from~\eqref{eq:frontier} and Lemma~\ref{lem:hyperplane} that
  $(\bm{1}_m\otimes\bm{x},((\phi^{\natural}_r-1)\bm{e}_r+\bm{1}_s)\otimes
  \bm{y}) \in \partial^s(T_{\mbox{\tiny VRS-TO}})$, and
  \begin{align}
  1-\frac{1}{m+s}<
  &\frac{m+s-1+1/\phi_r^{\natural}}{m+s} = \frac{
  \sum_{i=1}^m 1 + \sum_{q\not=r} 1 + 1/\phi_r^{\natural}}
  {m+s} \nonumber\\
   =& 
  f(\bm{1}_m,(\phi^{\natural}_r-1)\bm{e}_r+\bm{1}_s)\nonumber\\
  \leq
  & 
   \sup\left\{\,  f(\bm{\theta},\bm{\phi}) \,\left|\,
    (\bm{\theta}\otimes\bm{x},\bm{\phi}\otimes\bm{y}) \in \partial^s(T_{\mbox{\tiny VRS-TO}}), \eqref{P5}
   \right\}.\right. \label{ieq:supRM} 
  \end{align}
	Suppose that
  \begin{align*}
  \bm{\theta}\leq \bm{1}_m,\   \bm{\phi}\geq \bm{1}_s
  \mbox{  and } (\bm{\theta}\otimes\bm{x},\bm{\phi}\otimes \bm{y})
	\in \partial^s(\Tvrs).
  \end{align*}
	Then, for each $l=1,\dots,L$, we have
  $\bm{v}^l(\bm{\theta}\otimes\bm{x})-\bm{u}^l(\bm{\phi}\otimes\bm{y})
  -\sigma^l\ge 0$, and hence it follows from $\bm{\theta}\le\bm{1}_m$
  and $\bm{\phi}\ge\bm{1}_s$ that
  \begin{align*}
  0\leq   &\bm{v}^l(\bm{\theta}\otimes\bm{x})-\bm{u}^l(\bm{\phi}\otimes\bm{y})
    -\sigma^l
    \le
    \bm{v}^l\bm{x}-\sum_{q\neq r}u^l_qy_q-u^l_r\phi_ry_r-\sigma^l.
  \end{align*}
  Thus, for all $r\in I^+(\bm{y})$,
  \[
    \begin{aligned}
    &\phi_r\le\frac{\bm{v}^l\bm{x}-\sum_{q\neq r}u^l_qy_q-\sigma^l}{u^l_ry_r}
    \quad\text{for each $l=1,\dots,L$} \\
    \iff&
    \phi_r\le\min_{l=1,\dots,L}
    \frac{\bm{v}^l\bm{x}-\sum_{q\neq r}u^l_qy_q-\sigma^l}{u^l_ry_r}
    =\phi^\natural_r,
    \end{aligned}
  \]
  which implies that
  $1\leq \phi_r \leq \phi_r^{\natural}$ for all $r \in I^+(\bm{y})$.
  Therefore,  
  the problem~\eqref{ieq:supRM}  is equivalent to 
   \begin{align}
   \sup\left\{\,  f(\bm{\theta},\bm{\phi}) \,\left|\,
  \begin{array}{l} 
    (\bm{\theta}\otimes\bm{x},\bm{\phi}\otimes\bm{y}) \in \partial^s(T_{\mbox{\tiny VRS-TO}}),  \\
    \theta_i=1 \ (i\notin I^+(\bm{x})), \   0\leq \theta_i \leq 1    (i\in I^+(\bm{x})), \\
     \phi_r=1 \ (r\notin I^+(\bm{y})),\   1\leq \phi_r\leq  \phi^{\natural}_r \ (r\in I^+(\bm{y}))
  \end{array} 
   \right\}\right.. \label{OPTprob}
   \end{align}
	 This is because $\theta_ix_i=0$ holds for all $i \not\in I^+(\bm{x})$ and any $\theta_i\in [0,1]$,
	 and similarly, $\phi_r y_r=0$ holds for all $r\not\in I^+(\bm{y})$ and any $\phi_r\geq 1$.
  The feasible region of~\eqref{OPTprob} is nonempty and compact, and
  the objective function $f$ is continuous over the feasible region.
  Hence, the problem~\eqref{OPTprob} has an optimal solution;
  thus the maximum value of~\eqref{maxRM} exists.

  The latter assertion is readily shown due to the inequalities~\eqref{ieq:supRM}
  and the well-definedness of $\hat{F}(\bm{x},\bm{y};\Tvrs)$.
\end{proof}


As the following theorem indicates, the extended max RGM $\hat{F}$ avoids an
unrealistic target even if $\Tvrs$ allows free lunch, that is, $(\bm{0},\bm{y})\in\Tvrs$ for some $\bm{y}\in \bar{\bbR}^s_{+}$;
the condition on $\Tvrs$ that leads to the free lunch issue will be discussed in Section~\ref{sec:Comp}.

\begin{theorem}\label{Theorem2}
  Assume that $\partial^s(P)=\partial^w(P)$.
  For $(\bm{x},\bm{y})\in T_{\mbox{\tiny VRS-TO}}$, let 
	$(\bm{\theta}^*,\bm{\phi}^*)$ be an optimal solution to~\eqref{maxRM}.
  Then, 
  \begin{align}
		\bm{\theta}^*\in \bbR^m_{++} \mbox{ and } \theta^*_ix_i>0 \mbox{ for each } i \in I^+(\bm{x}).\label{eq33}
  \end{align}
\end{theorem}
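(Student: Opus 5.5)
The plan is to argue by contradiction, splitting the indices $i\in\{1,\dots,m\}$ according to whether $i\in I^+(\bm{x})$ or not. The key earlier result is the strict lower bound~\eqref{eq26} of Lemma~\ref{lemma1}. Since $\partial^s(P)=\partial^w(P)$ is assumed, that lemma also guarantees that an optimal solution $(\bm{\theta}^*,\bm{\phi}^*)$ to~\eqref{maxRM} exists and that
\[
f(\bm{\theta}^*,\bm{\phi}^*)=\hat{F}(\bm{x},\bm{y};\Tvrs)>1-\frac{1}{m+s}.
\]

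First I would handle $i\notin I^+(\bm{x})$, i.e.\ $x_i=0$. Here $\theta_ix_i=0$ for every $\theta_i\in[0,1]$, so changing $\theta_i$ does not affect membership of $(\bm{\theta}\otimes\bm{x},\bm{\phi}\otimes\bm{y})$ in $\partial^s(\Tvrs)$. The objective $f$ is strictly increasing in $\theta_i$. So if $\theta^*_i<1$, replacing $\theta^*_i$ by $1$ gives a feasible point with a strictly larger objective value, contradicting optimality. Hence $\theta^*_i=1>0$; this is the reduction already used to pass to~\eqref{OPTprob}.

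Next I would take $i\in I^+(\bm{x})$ and suppose $\theta^*_i=0$. The constraints~\eqref{P5} give $\theta^*_k\le 1$ for all $k$ and $1/\phi^*_r\le 1$ for all $r$. Therefore
\[
f(\bm{\theta}^*,\bm{\phi}^*)=\frac{1}{m+s}\Bigl(\sum_{k\neq i}\theta^*_k+\sum_{r=1}^s\frac{1}{\phi^*_r}\Bigr)\le\frac{m+s-1}{m+s}=1-\frac{1}{m+s},
\]
which contradicts~\eqref{eq26}. So $\theta^*_i>0$, and together with $x_i>0$ this yields $\theta^*_ix_i>0$. Combining the two cases gives $\bm{\theta}^*\in\bbR^m_{++}$ and~\eqref{eq33}.

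I do not expect a genuine obstacle, because all the substantive work is in Lemma~\ref{lemma1}. There, the frontier point $(\bm{1}_m\otimes\bm{x},((\phi^\natural_r-1)\bm{e}_r+\bm{1}_s)\otimes\bm{y})$ is constructed and the strict bound is derived. The only point needing care is the case $x_i=0$, where optimality forces $\theta^*_i=1$ rather than permitting an arbitrary value. The argument for that case must use the strict monotonicity of $f$, not just the bound~\eqref{eq26}.
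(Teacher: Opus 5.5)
Your proof is correct and is essentially the paper's argument: the strict bound~\eqref{eq26}, combined with $\theta^*_k\le 1$ and $1/\phi^*_r\le 1$, rules out $\theta^*_i=0$. One correction: the bound argument never uses $x_i>0$, so it already gives $\theta^*_i>0$ for every $i$ (this is how the paper proceeds), and your remark that the case $x_i=0$ \emph{must} use strict monotonicity of $f$ is mistaken --- your monotonicity argument there is valid but redundant, and only yields the stronger conclusion $\theta^*_i=1$.
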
 
\begin{proof}
  For any $(\bm{x},\bm{y})\in \Tvrs$, it follows from~\eqref{eq26} of Lemma~\ref{lemma1} that
  \begin{align*}
    &\hat{F}(\bm{x},\bm{y};\Tvrs)=
    \frac{1}{m+s}\left(
      \sum_{i=1}^m\theta^*_i+\sum_{r=1}^s\frac{1}{\phi^*_r}
    \right)>1-\frac{1}{m+s}=\frac{m+s-1}{m+s}\\
    &\iff
    \sum_{i=1}^m\theta^*_i+\sum_{r=1}^s\frac{1}{\phi^*_r}
    > m+s-1 \\
    &
    \begin{aligned}
    \iff\theta^*_i&>\left(m-\sum_{p\neq i}\theta^*_p\right)
    +\left(s-\sum_{r=1}^s \frac{1}{\phi^*_r}\right) - 1\\
    &\ge (m-(m-1))+(s-s) - 1 = 0 \mbox{\ for all } i=1,\ldots,m,
    \end{aligned}
  \end{align*}
	which means that $\bm{\theta}^*\in \bbR^{m}_{++}$.
  Therefore, $\theta^*_ix_i>0$ for each $i \in I^+(\bm{x}).$
\end{proof} 


The extended max RGM $\hat F$ has desirable properties (I) and (SM) as follows.
\begin{theorem}\label{Theorem1}
  Under $\partial^s(P)=\partial^w(P)$,
  the  extended max RGM
  $\hat{F}$ defined by~\eqref{maxRM} satisfies \emph{(I)} and \emph{(SM)}.
\end{theorem}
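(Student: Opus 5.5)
The plan is to treat (I) and (SM) separately. Both rely on the fact that $f(\bm{\theta},\bm{\phi})\le 1$ on the region~\eqref{P5}, with equality if and only if $\bm{\theta}=\bm{1}_m$ and $\bm{\phi}=\bm{1}_s$.

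\emph{Indication (I).} Suppose first that $(\bm{x},\bm{y})\in\partial^s(\Tvrs)$. Then $(\bm{\theta},\bm{\phi})=(\bm{1}_m,\bm{1}_s)$ is feasible for~\eqref{maxRM} with $f=1$. Since $f\le 1$ everywhere on~\eqref{P5}, this gives $\hat F(\bm{x},\bm{y};\Tvrs)=1$.

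Conversely, suppose $\hat F=1$, and let $(\bm{\theta}^*,\bm{\phi}^*)$ be an optimal solution, which exists by Lemma~\ref{lemma1}. The equality $f(\bm{\theta}^*,\bm{\phi}^*)=1$ forces $\bm{\theta}^*=\bm{1}_m$ and $\bm{\phi}^*=\bm{1}_s$. Feasibility then gives $(\bm{x},\bm{y})=(\bm{\theta}^*\otimes\bm{x},\bm{\phi}^*\otimes\bm{y})\in\partial^s(\Tvrs)$.

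\emph{Strong monotonicity (SM), setup.} Let $(\bm{x},\bm{y}),(\bm{x}',\bm{y}')\in\Tvrs$ with $(\bm{x},-\bm{y})\le(\bm{x}',-\bm{y}')$ and $(\bm{x},-\bm{y})\neq(\bm{x}',-\bm{y}')$. Then $(\bm{x}',\bm{y}')\notin\partial^s(\Tvrs)$, so by (I) we have $\hat F(\bm{x}',\bm{y}')<1$. Let $(\bm{\theta}',\bm{\phi}')$ be optimal for $(\bm{x}',\bm{y}')$, so $(\bm{\theta}',\bm{\phi}')\neq(\bm{1}_m,\bm{1}_s)$. By Theorem~\ref{Theorem2}, $\bm{\theta}'\in\bbR^m_{++}$. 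Consider the path
\[
\theta_i(t)=1-t(1-\theta'_i),\qquad \phi_r(t)=1+t(\phi'_r-1),\qquad t\in[0,1].
\]
Along this path, $g(t):=f(\bm{\theta}(t),\bm{\phi}(t))$ is strictly decreasing. Indeed, each $\theta_i(t)$ is nonincreasing, each $1/\phi_r(t)$ is nonincreasing, and at least one of them is strictly decreasing because $(\bm{\theta}',\bm{\phi}')\neq(\bm{1}_m,\bm{1}_s)$. Hence $g(t)>g(1)=\hat F(\bm{x}',\bm{y}')$ for every $t<1$.

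\emph{(SM), endpoints of the path.} At $t=0$ the point $z(0)=(\bm{x},\bm{y})$ lies in $\Tvrs$. At $t=1$, consider $z(1)=(\bm{\theta}'\otimes\bm{x},\bm{\phi}'\otimes\bm{y})$.
\begin{itemize}[label={}]
\item It weakly dominates the target $(\bm{\theta}'\otimes\bm{x}',\bm{\phi}'\otimes\bm{y}')\in\partial^s(\Tvrs)$.
\item It differs from that target. For any $i$ with $x_i<x'_i$, positivity of $\theta'_i$ gives $\theta'_ix_i<\theta'_ix'_i$. For any $r$ with $y_r>y'_r$, we have $\phi'_r\ge1$, so $\phi'_ry_r>\phi'_ry'_r$.
\end{itemize}
Hence $z(1)\notin\Tvrs$. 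Let $\bar t:=\max\{t\in[0,1]: z(t)\in\Tvrs\}$. This maximum exists because $\Tvrs$ is closed along the path, and $\bar t<1$.

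\emph{(SM), locating $z(\bar t)$ on the strong frontier.} Along the path we have $\theta_i(t)>0$ and $\bm{\phi}(t)\otimes\bm{y}\ge\bm{y}\in\bar{\bbR}^s_+$. So the path never leaves $\bbR^m_+\times\bar{\bbR}^s_+$, and the exit at $\bar t$ must happen through one of the constraints $\bm{v}^l\bm{x}-\bm{u}^l\bm{y}-\sigma^l\ge0$ of $P$.

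I expect this to be the main obstacle: showing the exit happens through an active facet equality rather than through the case of Lemma~\ref{lem:hyperplane}\ref{hyper.b}. The argument is as follows.
\begin{itemize}[label={}]
\item Suppose all the $L$ facet inequalities were strict at $z(\bar t)$.
\item By continuity, $z(t)$ would remain in $P$, and hence in $\Tvrs$, for $t$ slightly larger than $\bar t$.
\item This contradicts the maximality of $\bar t$.
\end{itemize}
So some facet $\bar l$ is active at $z(\bar t)$. By Lemma~\ref{lem:hyperplane}\ref{hyper.a}, which uses $\partial^s(P)=\partial^w(P)$, this gives $z(\bar t)\in\partial^s(\Tvrs)$.

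\emph{(SM), conclusion.} Since $(\bm{\theta}(\bar t),\bm{\phi}(\bar t))$ satisfies~\eqref{P5}, it is feasible for~\eqref{maxRM} at $(\bm{x},\bm{y})$. Therefore
\[
\hat F(\bm{x},\bm{y})\ \ge\ g(\bar t)\ >\ g(1)\ =\ \hat F(\bm{x}',\bm{y}').
\]
When $(\bm{x},\bm{y})$ is itself efficient, $\bar t=0$ and the same chain applies.
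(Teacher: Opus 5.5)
Your proof is correct and follows essentially the same route as the paper. You take an optimal $(\bm{\theta}',\bm{\phi}')$ for the dominated point, apply it to the dominating point, and slide back toward $(\bm{1}_m,\bm{1}_s)$ until the segment meets a facet, so your $\bar t$ corresponds to $1-\alpha$ for the paper's ratio-test parameter $\alpha$. The differences are only in presentation: you show that $z(1)\notin\Tvrs$ because it strictly dominates the strongly efficient target, whereas the paper uses the positive facet normals $(\bm{v}^{\bar l},\bm{u}^{\bar l})$; and you locate the crossing by a closedness and maximality argument, whereas the paper gives an explicit formula for $\alpha$.
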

\begin{proof}
\textbf{Proof of (I):}
For any given $(\bm{x},\bm{y})\in\Tvrs$,
		let $(\bm{\theta}^*,\bm{\phi}^*)$ be an optimal solution to~\eqref{maxRM}.
  Then, it follows from the definition~\eqref{emaxRM} of $\hat{F}$, $\bm{\theta}^*\leq \bm{1}_m$,  $(1/\phi_1^*,\ldots,1/\phi^*_s)\leq \bm{1}_s$ and 
  $(\bm{\theta}^*\otimes\bm{x},\bm{\phi}^*\otimes\bm{y}) \in \partial^s(T_{\mbox{\tiny VRS-TO}})$ that 
  \begin{align*}
    \hat{F}(\bm{x},\bm{y};\Tvrs)=1
    &\iff f(\bm{\theta}^*,\bm{\phi}^*)=\frac1{m+s}\left(\sum_{i=1}^m \theta^*_i
    +\sum_{r=1}^s\frac1{\phi^*_r}\right) = 1\\
    &\iff (\bm{\theta}^*,\bm{\phi}^*)=(\bm{1}_m,\bm{1}_s)\\
    &\iff (\bm{1}_m\otimes \bm{x},\bm{1}_s\otimes \bm{y})=(\bm{x},\bm{y})\in \partial^s(\Tvrs).
  \end{align*}

\textbf{Proof of (SM):}
Let $(\bm{\theta}^*,\bm{\phi}^*)$ be an optimal solution to~\eqref{maxRM}
  with $(\bm{x},\bm{y})\in\Tvrs$.
  Then, 
  \begin{align*}
  \bm{v}^l(\bm{\theta}^*\otimes \bm{x})-
  \bm{u}^l(\bm{\phi}^*\otimes\bm{y})-\sigma^l \geq 0,\ l=1,\ldots,L,
  \end{align*}
  and by $(\bm{\theta}^*\otimes\bm{x},\bm{\phi}^*\otimes\bm{y})\in\partial^s(\Tvrs)$ 
  and Lemma~\ref{lem:hyperplane}-\ref{hyper.a}, there exists $\bar{l}\in\{1,\dots,L\}$
  such that
  \begin{align*}
  \bm{v}^{\bar{l}}(\bm{\theta}^*\otimes \bm{x})-
  \bm{u}^{\bar{l}}(\bm{\phi}^*\otimes\bm{y})-\sigma^{\bar{l}} = 0. 
  \end{align*} 
  Consider $(\bm{x}',\bm{y}')\in T_{\mbox{\tiny VRS-TO}}$
  with $(\bm{x}',-\bm{y}')\leq (\bm{x},-\bm{y})$ and
  $(\bm{x}',-\bm{y}')\not= (\bm{x},-\bm{y})$.
  Then, we have $(\bm{x},\bm{y})\notin \partial^s(T_{\mbox{\tiny VRS-TO}})$ and $\theta^*_{\bar{i}}<1$ for some $\bar{i} \in \{1,\ldots,m\}$ or
  $\phi_{\bar{r}}^*>1$ for some $\bar{r}\in \{1,\ldots,s\}$, and hence
  $\hat{F}(\bm{x},\bm{y};T_{\mbox{\tiny VRS-TO}})<1.$ 

  If  $(\bm{x}',\bm{y}')\in \partial^s(T_{\mbox{\tiny VRS-TO}})$, then  
  we have
  \begin{align}\label{ieq:ind}
  \hat{F}(\bm{x}',\bm{y}';T_{\mbox{\tiny VRS-TO}})=1 > \hat{F}(\bm{x},\bm{y};T_{\mbox{\tiny VRS-TO}}).
  \end{align}

  Otherwise, 
  Lemma~\ref{lem:hyperplane} guarantees that
   $0< \min_{l=1,\ldots,L} \bm{v}^l(\bm{1}_m\otimes\bm{x}')-\bm{u}^l(\bm{1}_s\otimes \bm{y}')-\sigma^l$,
	 and for some $\bar{l}\in \{1,\ldots,L\}$, it follows from~\eqref{eq33} and $\bm{\phi}^*\geq \bm{1}_s$ that  
  \[
  0= \bm{v}^{\bar{l}}(\bm{\theta}^*\otimes\bm{x})-\bm{u}^{\bar{l}}(\bm{\phi}^*\otimes\bm{y})-\sigma^{\bar{l}}>
	\bm{v}^{\bar{l}}(\bm{\theta}^*\otimes\bm{x}')-\bm{u}^{\bar{l}}(\bm{\phi}^*\otimes \bm{y}')-\sigma^{\bar{l}}. 
  \]
	This means that 
  \begin{align}
      &\bm{v}^{\bar{l}}\left(\bm{\theta}^*\otimes \bm{x}'\right)-\bm{u}^{\bar{l}} \left(\bm{\phi}^*\otimes \bm{y}'\right)< \sigma^{\bar{l}} < \bm{v}^{\bar{l}}\left(\bm{1}_m\otimes \bm{x}'\right) -\bm{u}^{\bar{l}}\left(\bm{1}_s\otimes \bm{y}'\right),   \label{eqFRAC1}\\
\intertext{and }
&\sigma^l  <  \bm{v}^l(\bm{1}_m\otimes\bm{x}')-\bm{u}^l(\bm{1}_s\otimes \bm{y}')\   \mbox{ for all } {l} =1,\ldots,L.  \label{eqFRAC2}
  \end{align}
  Let 
  \begin{align}
  &\alpha :=\notag\\
	&\max\left\{
  \frac{\sigma^{{l}} -\bm{v}^{{l}}(\bm{\theta}^*\otimes\bm{x}')
  +\bm{u}^{{l}}(\bm{\phi}^*\otimes \bm{y}')
  }{
  \bm{v}^{{l}}\left((\bm{1}_m-\bm{\theta}^*)\otimes \bm{x}'\right)
  -\bm{u}^{{l}}\left((\bm{1}_s-\bm{\phi}^*)\otimes \bm{y}'\right)
  }\ \middle|\ 
  \begin{array}{l}
		l=1,\ldots,L \mbox{ and }\\
  \sigma^{{l}} > \bm{v}^{{l}}(\bm{\theta}^*\otimes\bm{x}')
  -\bm{u}^{{l}}(\bm{\phi}^*\otimes \bm{y}')
  \end{array}
  \right\}.\notag 
  \end{align}
  Then, it follows from~\eqref{eqFRAC1} and~\eqref{eqFRAC2}  that  $0<\alpha<1$. 
   This means that  $((\alpha \bm{1}_m+(1-\alpha)\bm{\theta}^*)\otimes\bm{x}',
  (\alpha \bm{1}_s+(1-\alpha)\bm{\phi}^*)\otimes\bm{y}')
  \in T_{\mbox{\tiny VRS-TO}}$, and
  for some $\hat{l} \in \{1,\ldots,L\}$, we have
  \[
  \bm{v}^{\hat{l}} ((\alpha \bm{1}_m+(1-\alpha)\bm{\theta}^*)\otimes\bm{x}')- \bm{u}^{\hat{l}} ((\alpha \bm{1}_s+(1-\alpha)\bm{\phi}^*)\otimes\bm{y}') - \sigma^{\hat{l}}=0. 
  \]
  Therefore, we have $((\alpha \bm{1}_m+(1-\alpha)\bm{\theta}^*)\otimes\bm{x}',
  (\alpha \bm{1}_s+(1-\alpha)\bm{\phi}^*)\otimes\bm{y}')
  \in \partial^s(T_{\mbox{\tiny VRS-TO}})$.
	Since the objective function $f(\bm{\theta},\bm{\phi})$ is increasing
	with respect to  $\bm{\theta}$ and decreasing with respect to $\bm{\phi}$,
  it follows from $(-\alpha (\bm{1}_m-\bm{\theta}^*)-\bm{\theta}^*,
  \alpha(\bm{1}_s-\bm{\phi}^*)+\bm{\phi}^*) \leq (-\bm{\theta}^*,\bm{\phi}^*)$
  and $(-\alpha (\bm{1}_m-\bm{\theta}^*)-\bm{\theta}^*,
  \alpha(\bm{1}_s-\bm{\phi}^*)+\bm{\phi}^*) \not= (-\bm{\theta}^*,\bm{\phi}^*)$
  that
  \begin{align*}
  \hat{F}(\bm{x}',\bm{y}';T_{\mbox{\tiny VRS-TO}}) \geq 
  f(\alpha \bm{1}_m+(1-\alpha)\bm{\theta}^*,\alpha \bm{1}_s+(1-\alpha)\bm{\phi}^*)>f(\bm{\theta}^*,\bm{\phi}^*)=\hat{F}(\bm{x},\bm{y};T_{\mbox{\tiny VRS-TO}}). 
  \end{align*}
\end{proof} 

The following theorem suggests that it is enough to solve $m+s$ LPs
to compute the efficiency score $\hat{F}(\bm{x},\bm{y};\Tvrs)$.

\begin{theorem}\label{thm:closest}
  Assume $\partial^s(P)=\partial^w(P)$.
  For any $(\bm{x},\bm{y})\in\Tvrs$, let $I^+(\bm{x})$ and $I^+(\bm{y})$
  be the sets of indices defined by~\eqref{eq:nonzero.indices},
  and let
  \begin{align*}
    \phi^*:=\displaystyle
    \min_{r \in I^+(\bm{y})}
    \max\{\phi_r\mid
    (\bm{x},\bm{\phi}\otimes\bm{y})\in T_{\mbox{\tiny VRS-TO}},\ \phi_q=1
    \ \forall q \not=r\},
  \end{align*}
  and
  \begin{align*}
    \theta^*:=
    \begin{cases}
      {1}/{\phi^*} & \text{if $I^+(\bm{x})=\emptyset$}; \\
      \displaystyle
      \max_{i \in I^+(\bm{x})}\min
      \left\{\theta_i\ \middle|\
      \begin{aligned}
         & (\bm{\theta}\otimes\bm{x},\bm{y})\in \Tvrs, \\
         & \theta_q=1\quad \forall q \not=i
      \end{aligned}
      \right\}
                       & \text{otherwise}.
    \end{cases}
  \end{align*}
  Then,
  \begin{align}
    \hat{F}(\bm{x},\bm{y};T_{\mbox{\tiny VRS-TO}})=
    \max\left\{
    \frac{m+s-1+\theta^*}{m+s}, \
    \frac{m+s-1+{1}/{\phi^*}}{m+s} 
    \right\}.                     \label{eqClosest}
  \end{align}
\end{theorem}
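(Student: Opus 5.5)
The plan is to show that an optimal solution of~\eqref{maxRM} can be taken to move only one coordinate away from $1$. Once that is established, comparing the $m+s$ single-coordinate candidates gives~\eqref{eqClosest}.

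\textbf{Step 1 (lower bound).} For each $r\in I^+(\bm{y})$, the inner maximum in the definition of $\phi^*$ equals $\phi^\natural_r$ from the proof of Lemma~\ref{lemma1}. That proof also shows $(\bm{x},((\phi^\natural_r-1)\bm{e}_r+\bm{1}_s)\otimes\bm{y})\in\partial^s(\Tvrs)$, so this point is feasible for~\eqref{maxRM} with value $(m+s-1+1/\phi^\natural_r)/(m+s)$. Taking the best $r$ gives $\hat F\ge (m+s-1+1/\phi^*)/(m+s)$.

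Symmetrically, for $i\in I^+(\bm{x})$, let $\theta^\natural_i:=\max_l\bigl(\sigma^l+\bm{u}^l\bm{y}-\sum_{p\ne i}v^l_px_p\bigr)/(v^l_ix_i)$, which uses $v^l_i>0$ from Lemma~\ref{lem:positive.vandu}. Then $\theta^\natural_i$ is exactly the inner minimum, clipped at $0$. One subtlety: if $\theta^\natural_i\le 0$, the point may have $\theta_ix_i=0$. In that case $\partial^s$ membership has to be checked via Lemma~\ref{lem:hyperplane}\ref{hyper.a}, which requires some facet to be tight. I expect to argue that the minimum is attained on a tight facet, or else that this case contributes a value at most the $\phi$-candidate. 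The corresponding point $(\bm{\theta}\otimes\bm{x},\bm{y})$ with $\theta_q=1$ for $q\ne i$ is then on a tight facet, hence in $\partial^s(\Tvrs)$, with value $(m+s-1+\theta^\natural_i)/(m+s)$. The degenerate cases $I^+(\bm{x})=\emptyset$ (where the convention $\theta^*=1/\phi^*$ makes the first term duplicate the second) and $I^+(\bm{y})=\emptyset$ are handled separately; for the latter, note $\Tvrs$ excludes $\bm{y}=\bm{0}$, so $I^+(\bm{y})\ne\emptyset$ automatically.

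\textbf{Step 2 (upper bound; the main step).} Let $(\bm{\theta},\bm{\phi})$ be optimal, with the coordinates outside $I^+$ normalized to $1$ as in~\eqref{OPTprob}. The target lies on some tight facet $\bar l$, so
\[
\sum_i v^{\bar l}_i(1-\theta_i)x_i+\sum_r u^{\bar l}_r(\phi_r-1)y_r=\bm{v}^{\bar l}\bm{x}-\bm{u}^{\bar l}\bm{y}-\sigma^{\bar l}=:g\ge 0 .
\]
Write $a_i:=v^{\bar l}_ix_i$ and $b_r:=u^{\bar l}_ry_r$. The deficit $m+s-(m+s)f$ equals $\sum_i(1-\theta_i)+\sum_r(1-1/\phi_r)$. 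Concentrating the whole budget $g$ on a single coordinate gives the candidates $1-\theta=g/a_i$ and $1-1/\phi=g/(b_r+g)$. I would prove
\[
\sum_i(1-\theta_i)+\sum_r(1-1/\phi_r)\ \ge\ \min\Bigl\{\min_i \tfrac{g}{a_i},\ \min_r \tfrac{g}{b_r+g}\Bigr\}.
\]
This holds because each term is a concave-type function of its share $t_k$ of the budget $g$ (linear for $\theta$, and $t/(b_r+t)$ is concave for $\phi$), and a concave function on the simplex attains its minimum at a vertex. Concretely, I would use $t/(b+t)\ge (t/g)\cdot g/(b+g)$ for $0\le t\le g$, sum over $k$, and use $\sum t_k/g=1$.

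\textbf{Step 3 (closing the comparison).} The single-coordinate values $g/a_i$ and $g/(b_r+g)$ computed from facet $\bar l$ are at least the true single-coordinate deficits $1-\theta^\natural_i$ and $1-1/\phi^\natural_r$. The reason is that the binding facet for each $\phi^\natural_r$ or $\theta^\natural_i$ is the most restrictive one, so facet $\bar l$ alone permits at least as large a move. Hence the deficit of the optimum is at least the smallest single-coordinate deficit, which gives $\hat F\le$ the right-hand side of~\eqref{eqClosest}. Combined with Step 1, this proves equality.

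I expect the main obstacle to be Step 2 together with the boundary care. This means verifying the vertex/concavity inequality cleanly, and ensuring that the clipping $\theta\ge0$ and the tight-facet requirement for $\partial^s$ do not break the single-coordinate candidates.
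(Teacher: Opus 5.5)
Your proof is correct, but it takes a different route from the paper's.

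\textbf{What the paper does.} The paper never argues at the level of individual facets. It fixes the coordinates outside $I^+(\bm{x})$ and $I^+(\bm{y})$ at $1$, which contributes $(m-|I^+(\bm{x})|+s-|I^+(\bm{y})|)/(m+s)$ to the objective. It then takes the single-coordinate optimality for the remaining positive coordinates directly from \cite[Theorem~4.5]{Sekitani02032025}.

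\textbf{What you do.} You prove that optimality from scratch. You write the tight-facet identity at a target as a budget $g=\sum_k t_k$ with $t_k\ge 0$. For the $\phi$ terms you use the chord bound $t/(b+t)\ge (t/g)\,g/(b+g)$, and for the $\theta$ terms you use linearity, to show the deficit is at least $\min_k c_k$. You then compare facet $\bar l$ with the extremal facets defining $\theta^\natural_i$ and $\phi^\natural_r$, which gives $1-\theta^\natural_i\le g/a_i$ and $1-1/\phi^\natural_r\le g/(b_r+g)$. I checked these steps and they hold. Zero coordinates cause no trouble, because $a_i=0$ or $b_r=0$ forces $t_k=0$.

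\textbf{What each route buys.} Your argument is self-contained: it uses only Lemma~\ref{lem:positive.vandu} and Lemma~\ref{lem:hyperplane}\ref{hyper.a}. It therefore visibly covers the boundary and trade-off setting. The paper instead relies on a result that, by its own description, was developed for positive data without trade-offs, and it leaves the transfer implicit. Your Step 2 also applies verbatim to any feasible $(\bm{\theta},\bm{\phi})$, since unnormalized zero coordinates only add nonnegative deficit. Because Step 1 exhibits a feasible point attaining the bound, you get existence of the maximum without Lemma~\ref{lemma1}. The paper's route is shorter and makes the bookkeeping for zero coordinates transparent.

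\textbf{The case you left open in Step 1.} It closes immediately. If $\theta^*>0$, the facet attaining the maximum that defines $\theta^*$ is tight at the candidate point. Moreover $\theta^*\le 1$ because $(\bm{x},\bm{y})\in P$. So the candidate lies in $\partial^s(\Tvrs)$. If $\theta^*=0$, the first term in \eqref{eqClosest} equals $(m+s-1)/(m+s)$. This is strictly below the second term, since $\phi^*$ is finite and hence $1/\phi^*>0$. So in that case the $\theta$-candidate is irrelevant.
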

\begin{proof}
  By~\cite[Theorem~4.5]{Sekitani02032025}, $\partial^s(P)=\partial^w(P)$ 
  and $\theta^*=1/{\phi^*} $ in the case where
	$I^+(\bm{x})=\emptyset$, it follows that
  \begin{align*}
  &\frac{| I^+(\bm{x})| +|I^+(\bm{y})|-1+\max\left\{\theta^*,1/\phi^*\right\}}{m+s}\\
   = &\max\left\{  
  \frac{\sum_{i\in I^+(\bm{x})} \theta_i+  \sum_{r\in I^+(\bm{y})} {1}/{\phi_r}}{m+s}
   \,\left|
  \begin{array}{l} 
    (\bm{\theta}\otimes\bm{x},\bm{\phi}\otimes\bm{y}) \in \partial^s(T_{\mbox{\tiny VRS-TO}}),\ \eqref{P5}, \\
    \theta_i=1 \ (i\notin I^+(\bm{x})), 
     \phi_r=1 \ (r\notin I^+(\bm{y}))
  \end{array}
   \right\},\right.
  \end{align*}
  and hence we have 
  \begin{align*}
  & \hat{F}(\bm{x},\bm{y};T_{\mbox{\tiny VRS-TO}})\\
  =& \left.\max\left\{\,  \frac{\sum_{i=1}^m \theta_i + \sum_{r=1}^s {1}/{\phi_r}}{m+s} \,\right|\,
  \begin{array}{l} 
   (\bm{\theta}\otimes\bm{x},\bm{\phi}\otimes\bm{y}) \in \partial^s(T_{\mbox{\tiny VRS-TO}}),\ \eqref{P5}
   \end{array}
   \right\}\\
  =&  \left.\max\left\{\,  \frac{\sum_{i=1}^m \theta_i + \sum_{r=1}^s {1}/{\phi_r}}{m+s} \,\right|\,
  \begin{array}{l} 
    (\bm{\theta}\otimes\bm{x},\bm{\phi}\otimes\bm{y}) \in \partial^s(T_{\mbox{\tiny VRS-TO}}),\ \eqref{P5}, \\
    \theta_i=1 \ (i\notin I^+(\bm{x})), \ \phi_r=1 \ (r\notin I^+(\bm{y}))\\
  \end{array}
   \right\}\\
   =&  \left.\max\left\{\,  
  \begin{array}{l} 
   \frac{\sum_{i\in I^+(\bm{x})} \theta_i + \sum_{r\in I^+(\bm{y})} {1}/{\phi_r}}{m+s} \\
  + \frac{\sum_{i\notin I^+(\bm{x})} \theta_i+  \sum_{r\notin I^+(\bm{y})} {1}/{\phi_r}}{m+s}
   \end{array}
   \,\right|\,
  \begin{array}{l} 
    (\bm{\theta}\otimes\bm{x},\bm{\phi}\otimes\bm{y}) \in \partial^s(T_{\mbox{\tiny VRS-TO}}),\ \eqref{P5}, \\
    \theta_i=1 \ (i\notin I^+(\bm{x})),\ \phi_r=1 \ (r\notin I^+(\bm{y}))
  \end{array}
   \right\}\\
  =&   
  \frac{m-{| I^+(\bm{x})|}+ s- | I^+(\bm{y})|}{m+s}+\\
  &\max\left\{\,  
   \frac{\sum_{i\in I^+(\bm{x})} \theta_i + \sum_{r\in I^+(\bm{y})}
   {1}/{\phi_r}}{m+s}\ \middle|\ 
   \begin{aligned}
    &(\bm{\theta}\otimes\bm{x},\bm{\phi}\otimes\bm{y}) \in \partial^s(T_{\mbox{\tiny VRS-TO}}),\ \eqref{P5}, \\
    &\theta_i=1 \ (i\notin I^+(\bm{x})), \ \phi_r=1 \ (r\notin I^+(\bm{y}))
   \end{aligned}
   \right\}\\
  =&\frac{m-{| I^+(\bm{x})|}+ s- | I^+(\bm{y})|}{m+s}+\frac{| I^+(\bm{x})| +|I^+(\bm{y})|-1+\max\left\{\theta^*,1/\phi^*\right\}}{m+s} \\
  =&\max\left\{
  \frac{m+s-1+\theta^*}{m+s}, \
  \frac{m+s-1+{1}/{\phi^*}}{m+s} 
  \right\}.
  \end{align*}
\end{proof}

From~\eqref{eqClosest} of Theorem~\ref{thm:closest}
and~\cite[Corollary~3]{Briec1999holder}, the extended max RGM $\hat{F}$ 
always provides the closest target to the efficient frontier of
$T_{\mbox{\tiny VRS-TO}}$.
This also suggests a practical aspect that for (inefficient) DMUs, 
the extended max RGM provides targets that enable them to become
efficient with less effort.



\section{Frontier Assumption Check and Free Lunch Issue }
\label{sec:Comp}
We can see that the key assumption $\partial^s(P)=\partial^w(P)$ depends on
the choice of the trade-offs representation matrices $(R^-,R^+)$.
In this section, we demonstrate that checking the assumption
$\partial^s(P)=\partial^w(P)$ can be achieved by solving a series of LPs.
We also address the free lunch issue, and checking the occurrence of
the free lunch issue simply requires solving LPs.

For the trade-offs representation matrices $(R^-,R^+)$,
a condition for $\partial^s(P)=\partial^w(P)$ is shown as follows:  
\begin{theorem}\label{Le4a.ineff}
The set~$P$ given by~\eqref{eqP} satisfies  
\begin{align}
	P=\left\{(\bm{x},\bm{y})\left|
		\bm{v}\bm{x}-\bm{u}\bm{y}\geq \sigma\quad \forall (\bm{v},\bm{u},\sigma)\in W
\right\}\right.,\label{eqPW}
\end{align}
where 
\[
	W:=\left\{ (\bm{v},\bm{u},\sigma) \left| \begin{array}{l}
			\bm{v}\bm{1}_m + \bm{u}\bm{1}_s = 1,\\
			\bm{v}\bm{x}_j -\bm{u}\bm{y}_j -\sigma \geq 0,\ j=1,\ldots,n, \\
			\bm{v} R^--\bm{u}R^+ \geq   \bm{0},\ (\bm{v},\bm{u})\geq (\bm{0},\bm{0})
		\end{array}
\right\}\right..
\] 
For each $i=1,\dots,m$, let $v_i^*$ be the optimal value of
\begin{align}
	\min\left\{ v_i  \left| 
		\begin{array}{l}
			\bm{v}\bm{1}_m + \bm{u}\bm{1}_s = 1, \\
			\bm{v} R^--\bm{u}R^+ \geq   \bm{0}, (\bm{v},\bm{u})\geq (\bm{0},\bm{0})
\end{array} \right\}\right.,\label{eqVC}
\end{align}
and for each $r=1,\dots,s$, let  $u_r^*$ be the optimal value of 
\begin{align}
	\min\left\{ u_r  \left|
		\begin{array}{l}
			\bm{v}\bm{1}_m + \bm{u}\bm{1}_s = 1, \\
			\bm{v} R^--\bm{u}R^+ \geq  \bm{0}, (\bm{v},\bm{u})\geq (\bm{0},\bm{0})
		\end{array}
\right\}\right..\label{eqUC}
\end{align}
If $v_i^*>0$ for all $i=1,\ldots,m$ and $u^*_r>0$ for all $r=1,\ldots,s$,
then,
\begin{align}
	\partial^s(P) = \partial^w(P).\label{assumption}
\end{align}
\end{theorem}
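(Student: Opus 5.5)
The plan has two parts. First I establish the representation~\eqref{eqPW}. Then I show that every facet normal of $P$ is strictly positive, so that Lemma~\ref{lem:positive.vandu} yields~\eqref{assumption}.

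\textbf{Representation~\eqref{eqPW}.} I will use LP duality on membership in $P$. For fixed $(\bm{x},\bm{y})$, membership in $P$ is feasibility of the system in $(\bm\lambda,\bm\pi)$ given in~\eqref{eqP}. By Farkas' lemma, this system is infeasible if and only if there exist $\bm v\ge\bm0$, $\bm u\ge\bm0$ and a free $\sigma$ with the following properties:
\[
\bm v\bm x_j-\bm u\bm y_j-\sigma\ge 0 \ (j=1,\dots,n),\qquad \bm vR^--\bm uR^+\ge\bm 0,\qquad \bm v\bm x-\bm u\bm y-\sigma<0 .
\]
Such a certificate cannot have $(\bm v,\bm u)=(\bm0,\bm0)$. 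Indeed, $\sum\lambda_j=1$ with $n\ge1$ forces $\sigma\le 0$ from the first inequalities, and then $-\sigma<0$ is impossible. So we may normalize $\bm v\bm 1_m+\bm u\bm 1_s=1$, which lands the certificate in $W$. Conversely, every element of $W$ gives a valid inequality for $P$: multiply the defining constraints of $P$ by $(\bm v,\bm u)\ge\bm0$ and use $\sum_j\lambda_j=1$ and $\bm\pi\ge\bm0$. This proves~\eqref{eqPW}. Since $P$ is polyhedral, each facet normal $(\bm v^l,\bm u^l,\sigma^l)$ from~\eqref{P}, after scaling by its positive coordinate sum, lies in $W$.

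\textbf{Positivity of facet normals.} Take any $(\bm v,\bm u,\sigma)\in W$. Its pair $(\bm v,\bm u)$ is feasible for~\eqref{eqVC} and~\eqref{eqUC}, because those problems keep the constraints $\bm v\bm 1_m+\bm u\bm 1_s=1$, $\bm vR^--\bm uR^+\ge\bm0$ and $(\bm v,\bm u)\ge\bm0$ and only drop the DMU constraints and $\sigma$. Hence $v_i\ge v_i^*>0$ for every $i$ and $u_r\ge u_r^*>0$ for every $r$. In particular, every normalized facet normal $(\bm v^l,\bm u^l)$ lies in $\bbR^{m+s}_{++}$, and Lemma~\ref{lem:positive.vandu} gives $\partial^s(P)=\partial^w(P)$.

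\textbf{Expected obstacle.} The main care point is the Farkas/duality step. I must write the alternative system with the correct sign pattern, namely a free $\sigma$ for the equality $\sum\lambda_j=1$ and $\bm v,\bm u\ge\bm0$ for the inequality rows. I must also exclude the degenerate certificate $(\bm v,\bm u)=\bm 0$ before normalizing. A secondary point is justifying that the facet normals in~\eqref{P} can be taken from $W$. This follows because a facet-defining inequality of $P$ is valid for $P$, hence (by the converse Farkas direction, or equivalently the dual characterization of valid inequalities) it is implied by a nonnegative combination of the defining constraints. That combination is exactly an element of the cone generated by $W$, up to scaling.
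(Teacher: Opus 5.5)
Your proposal is correct and follows essentially the same route as the paper. You establish \eqref{eqPW} by LP duality (via the equivalent Farkas alternative, correctly excluding the certificate $(\bm v,\bm u)=(\bm 0,\bm 0)$ before normalizing), note that each normalized facet normal of $P$ lies in $W$ and is therefore feasible for \eqref{eqVC} and \eqref{eqUC}, hence strictly positive, and conclude with Lemma~\ref{lem:positive.vandu}. The only difference is cosmetic: the paper checks that the normalized facet normals lie in $W$ directly, by evaluating the facet inequality at the DMUs and at a minimizing facet point shifted by $R^-\bm e_t$, $R^+\bm e_t$, whereas you invoke the affine Farkas characterization of valid inequalities, which yields the same conditions.
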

\begin{proof}
We first show the validity of~\eqref{eqPW}.
It follows from the definitions of~\eqref{eqP} and  $W$ and the duality theorem of LP that $(\bm{x},\bm{y}) \in P$ if and only if
\begin{align*}
& 0\leq \max\left\{ \delta \left| 
\begin{array}{l}
\sum_{j=1}^n \lambda_j \bm{x}_j + R^-\bm{\pi} \leq \bm{x} -\delta \bm{1}_m, \\
\sum_{j=1}^n \lambda_j \bm{y}_j + R^+\bm{\pi} \geq \bm{y}+\delta \bm{1}_s,  \\
\sum_{j=1}^n \lambda_j =1,\, \bm{\lambda}\geq \bm{0}, \bm{\pi}\geq \bm{0}
\end{array}
\right\} \right.=\min\left\{ \bm{v}\bm{x}-\bm{u}\bm{y}-\sigma \left| (\bm{v},\bm{u},\sigma)\in W
\right\} \right.\\
\iff&  \bm{v}\bm{x}-\bm{u}\bm{y}-\sigma \geq 0 \mbox{  for all } (\bm{v},\bm{u},\sigma)\in W.
\end{align*}
This means that~\eqref{eqPW} is valid. 
\par
For all $l=1,\ldots,L$, $(\bm{v}^l,\bm{u}^l,\sigma^l)$ of~\eqref{P}   satisfies $\bm{v}^l\bm{1}_m+\bm{u}^l\bm{1}_s>0$.
Let $(\bar{\bm{v}}^l,\bar{\bm{u}}^l,\bar{\sigma}^l):=\frac{1}{\bm{v}^l\bm{1}_m+\bm{u}^l\bm{1}_s}(\bm{v}^l,\bm{u}^l,\sigma^l)$ 
and $(\bm{x}^l,\bm{y}^l)\in \arg\min\{ \bm{v}^l\bm{x}-\bm{u}^l\bm{y} | (\bm{x},\bm{y})\in P  \}$ for all $l=1,\ldots,L$.
Then, 
\[
	\bar{\sigma}^l=\frac{1}{\bm{v}^l\bm{1}_m+\bm{u}^l\bm{1}_s}\sigma^l=\frac{1}{\bm{v}^l\bm{1}_m+\bm{u}^l\bm{1}_s}(\bm{v}^l\bm{x}^l-\bm{u}^l\bm{y}^l)=\bar{\bm{v}}^l\bm{x}^l-\bar{\bm{u}}^l\bm{y}^l,
\]
and 
\begin{align*}
	0&\leq \frac{1}{\bm{v}^l\bm{1}_m+\bm{u}^l\bm{1}_s}( \bm{v}^l\bm{x}_j-\bm{u}^l\bm{y}_j -\sigma^l)= \bar{\bm{v}}^l\bm{x}_j-\bar{\bm{u}}^l\bm{y}_j -\bar{\sigma}^l,\qquad  j=1,\ldots,n,\\
0&\leq 
\frac{1}{\bm{v}^l\bm{1}_m+\bm{u}^l\bm{1}_s}( \bm{v}^l(\bm{x}^l+R^-\bm{e}_t)-\bm{u}^l(\bm{y}^l+R^+\bm{e}_t) -\sigma^l)=\bar{\bm{v}}^l(\bm{x}^l+R^-\bm{e}_t)-\bar{\bm{u}}^l(\bm{y}^l+R^+\bm{e}_t) -\bar{\sigma}^l \\
&= \bar{\bm{v}}^l\bm{x}^l- \bar{\bm{u}}^l\bm{y}^l -\bar{\sigma}^l+ \bar{\bm{v}}^lR^-\bm{e}_t - \bar{\bm{u}}^lR^+\bm{e}_t=
\bar{\bm{v}}^lR^-\bm{e}_t - \bar{\bm{u}}^lR^+\bm{e}_t, \qquad t=1,\ldots,K,
\end{align*} 
where $\bm{e}_t$ is a unit vector whose $t$th component is $1$. 
This means from $\bar{\bm{v}}^l\bm{1}_m+\bar{\bm{u}}^l\bm{1}_s=1$ and 
$(\bar{\bm{v}}^l,\bar{\bm{u}}^l) \in \bar{\bbR}^{m+s}_+$ that 
$(\bar{\bm{v}}^l,\bar{\bm{u}}^l,\bar{\sigma}^l)\in W$ for all $l=1,\ldots,L$.
By the assumption of $v^*_i>0$ for all $i=1,\dots,m$ and $u^*_r>0$ for all $r=1,\dots,s$,
we have $(\bar{\bm{v}}^l,\bar{\bm{u}}^l) \in \bbR^{m+s}_{++}$ for all $l=1,\ldots,L$.
It follows from Lemma~\ref{lem:positive.vandu} that the equation~\eqref{assumption} holds. 
\end{proof}
Checking the sufficient condition for $\partial^s(P)=\partial^w(P)$
is easy since it is enough to solve two types of LPs~\eqref{eqVC}
and~\eqref{eqUC}.

The free lunch issue of the PPS  $T_{\mbox{\tiny VRS-TO}}$ is characterized as follows.
\begin{theorem}\label{FreelunchCheck}  
Consider $P$ defined by~\eqref{P} and assume that the optimal value of~\eqref{eqUC} is positive for each $r=1,\ldots,s$. 
If the optimal value of 
\begin{align}
	\max \left.\left\{ \sum_{r=1}^s  d^+_r\, \right|\, (\bm{0},\bm{d}^+)\in P, \ \bm{d}^+\in \bbR^s_+ \right\}  \label{Free}
\end{align}
is positive, 
then there exists
$\bm{y}\in \bar{\bbR}^s_{+}$ satisfying
$(\bm{0},\bm{y})\in\Tvrs$ and vice versa. 
\end{theorem}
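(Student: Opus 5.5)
The argument will go directly through the definitions of $P$ and $\Tvrs=P\cap(\bbR^m_+\times\bar{\bbR}^s_+)$, with LP~\eqref{Free} as the bridge. The positivity of~\eqref{eqUC} is used only to make sure~\eqref{Free} is bounded, so that "the optimal value" is a finite number. We prove the two directions separately.

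\emph{($\Rightarrow$)} Suppose the optimal value of~\eqref{Free} is positive. Take a feasible $\bm{d}^+\in\bbR^s_+$ with $(\bm{0},\bm{d}^+)\in P$ and $\sum_r d^+_r>0$; this exists because the value is positive, whether or not it is attained. Then $\bm{d}^+\in\bar{\bbR}^s_+$, and $\bm{0}\in\bbR^m_+$. Hence $(\bm{0},\bm{d}^+)\in P\cap(\bbR^m_+\times\bar{\bbR}^s_+)=\Tvrs$, so $\bm{y}:=\bm{d}^+$ is the required vector.

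\emph{($\Leftarrow$)} Suppose $(\bm{0},\bm{y})\in\Tvrs$ for some $\bm{y}\in\bar{\bbR}^s_+$. Since $\Tvrs\subseteq P$, the vector $\bm{d}^+=\bm{y}$ is feasible for~\eqref{Free} with objective $\bm{1}_s^\top\bm{y}>0$. So the supremum of~\eqref{Free} is positive.

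It remains to show that the optimum of~\eqref{Free} exists, which is the only delicate point. Under the hypothesis, every $(\bm{v},\bm{u})$ feasible for~\eqref{eqUC} has $\bm{u}\in\bbR^s_{++}$. We use representation~\eqref{eqPW} of Theorem~\ref{Le4a.ineff}. The set $W$ is nonempty, since $P$ has facets and the normalized facet normals lie in $W$ by the argument in that proof; if $W$ were empty, \eqref{eqPW} would force $P$ to be the whole space, which is impossible under the hypothesis. Fix $(\bm{v},\bm{u},\sigma)\in W$. Then every feasible $\bm{d}^+$ satisfies $-\bm{u}\bm{d}^+\ge\sigma$, and since $\bm{u}>\bm{0}$ this gives $\bm{1}_s^\top\bm{d}^+\le -\sigma/\min_r u_r$. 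So~\eqref{Free} is a feasible (take $\bm{d}^+=\bm{0}$ when $(\bm{0},\bm{0})\in P$, otherwise a positive value is impossible anyway) and bounded LP whenever its feasible set is nonempty. An LP with these properties attains its optimum, so its optimal value is finite and positive exactly in the situation described. The main obstacle is this boundedness and nonemptiness of $W$; everything else is immediate from the definitions.
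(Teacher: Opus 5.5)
Your proof is correct and is essentially the paper's argument. The element of $W$ rescaled by $1/\min_r u_r$, which gives your bound $\bm{1}_s^\top\bm{d}^+\le-\sigma/\min_r u_r$, is exactly the feasible point of the dual LP~\eqref{DFree} that the paper constructs; the paper then invokes the LP duality theorem where you use weak duality and attainment for feasible bounded LPs, and it shows $W\neq\emptyset$ more directly than your facet argument, by taking an optimal $(\bm{v}^1,\bm{u}^1)$ of~\eqref{eqUC} and setting $\sigma^1:=\min_j(\bm{v}^1\bm{x}_j-\bm{u}^1\bm{y}_j)$.
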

\begin{proof}
	\textbf{Sufficiency ($\Leftarrow$)}:
	Let $(\bm{v}^1,\bm{u}^1)$ be an optimal solution to~\eqref{eqUC} for $r=1$, and let $\sigma^1:=\min_{j=1,\ldots,n} \bm{v}^1\bm{x}_j -\bm{u}^1\bm{y}_j$.
Then, by the assumption of $u_r^*>0$ for all $r=1,\ldots,s$, $u^1_r\geq  u_r^*>0$ for all $r=1,\dots,s$, and
$\frac{1}{\min_{r=1,\ldots,s}u^1_r}(\bm{v}^1,\bm{u}^1,\sigma^1)$ is a feasible solution to 
the following dual problem of~\eqref{Free}:
 \begin{align}
  \min\left\{ -\sigma \left|
  \bm{v}\bm{x}_j - \bm{u}\bm{y}_j \geq \sigma  \ (j=1,\ldots,n),
	\ \bm{v}R^- - \bm{u}R^+ \ge \bm{0},  \bm{u} \geq \bm{1}_s^{\top}, \bm{v}\geq \bm{0}, \bm{u}\geq \bm{0}
\right.  \right\}. \label{DFree}
  \end{align}
  Since there exists $\bm{y}\in \bar{\bbR}^s_{+}$ satisfying $(\bm{0},\bm{y})\in\Tvrs\subseteq P$, 
	$\bm{y}$ is  a feasible solution to~\eqref{Free} and $\sum_{r=1}^s y_r>0$ which is the objective function value of the feasible solution $\bm{y}$.
The feasibility of both~\eqref{Free} and~\eqref{DFree} guarantees that
each problem has an optimal solution by the duality theorem.
Therefore,~\eqref{Free} and~\eqref{DFree} have the same positive optimal value.
\par
\textbf{Necessity ($\Rightarrow$)}:
	Let $\bm{d}^{+*}$ be an optimal solution to~\eqref{Free}.
	Then, we have $\bm{d}^{+*}\in \bar{\bbR}^s_+$, and hence $(\bm{0},{\bm{d}}^{+*})\in P\cap( \bbR^{m}_+\times \bar{\bbR}^s_{+})= \Tvrs$. 
\end{proof}
The potential  free lunch issue in $\hat{F}$ can be exposed by the 
free lunch characterization of Theorem~\ref{FreelunchCheck},
which is to solve a single LP~\eqref{Free}.
We will reveal via numerical experiments in Section~\ref{sec:examples} that
the target with the existing model has the free lunch issue for many DMUs
in a real-world dataset, but our model does not.
\section{An Illustrative Example}\label{sec:examples}
We employ an actual dataset on the performance of
participating nations at the Paris 2024 Summer Olympic Games,
which includes zero outputs for many DMUs,
to illustrate how to implement production trade-offs into  a pair of  
matrices $(R^-,R^+)$ and to compare the practicality of $\hat{F}$ with $\barEfgl$.  
The actual dataset contains 90 observations  on three outputs (the numbers of gold, silver and bronze medals) and three inputs (GDP per capita, population and the number of teams) of 
the participating nations that received at least one medal.
This dataset is listed in the  lexicographical order of the numbers of gold, silver and bronze medals won by each nation, 
which is shown  in  columns 3 to 8 of Tables~\ref{tab:Paris1}--\ref{tab:Paris3}.
DMU${}_j$ denotes the $j$th nation, and $\bm{x}_j=(x_{1j},x_{2j},x_{3j})^\top$ denotes
GDP per capita, population and the number of teams of DMU$_j$, respectively.
Similarly, $\bm{y}_j=(y_{1j},y_{2j},y_{3j})^\top$ denotes 
the numbers of gold, silver and bronze medals won by DMU$_j$, respectively.

The VRS PPS generated by 90 DMUs' input--output vectors is given by
\[
  \left\{
  \left(\bm{x},\bm{y}\right)
  \begin{array}{|l}
    \sum_{j\in {\cal B}} \lambda_j \bm{x}_j \leq \bm{x},  \\
    \sum_{j\in {\cal B}} \lambda_j \bm{y}_j  \geq \bm{y}, \\
    \sum_{j\in {\cal B}} \lambda_j =1,  \lambda_j \geq 0 \ (j\in {\cal B})
  \end{array}
  \right\},
\]
where 
\[
  \begin{aligned}
  {\cal B}:=
  \{1,2,3,4,5,6,7,8,11,13,14,17,20,21,23,32,33,\\
  44,57,62,63,66,68,69,72,73,75,79,81,85\}
  \end{aligned}
\]
is the set of efficient DMUs evaluated by the BCC DEA model~\cite{banker1984some},
which is a standard DEA model.
Hence, the proportion of efficient DMUs by the BCC DEA model is
$|{\cal B}|/90=30/90=1/3$.
\par 
To reduce the proportion,
we put the following production trade-offs assumptions:
\begin{itemize}
\item any nation can change its silver medal to  bronze medal equally while keeping its inputs constant;
\item any nation can move its input--output vector along directions of the change of the top nations towards other nations; 
\item  any nation can move  its input--output vector  along directions of the change of Great Britain (DMU$_7$) towards those top nations. 
\end{itemize}

The first assumption is implemented as~\eqref{00output}. 
The top nations of the second and third assumptions are chosen
from all 30 DMU$_j$ ($j \in {\cal B}$) as indicated in the following 
10 DMUs:
\[
{\cal B}_{11-}:=\{1,2,3,4,5,6,7,8,11,13,14\}\setminus \{2\}.
\] 
Let ${\cal B}_{11}:=\{1,2,3,4,5,6,7,8,11,13,14\}$.
Then, ${\cal B}_{11}$ is the top 11  DMUs in~$\cal B$.  

Based on this set, we first attempted to check the production trade-offs,
but there is no feasible solution to the following system:
\begin{align}
&\bm{v}\bigl(  \bm{x}_p -\bm{x}_q \bigr) - \bm{u}\bigl(  \bm{y}_p -\bm{y}_q \bigr) \geq 0\quad 
\forall (p,q) \in \bigl(\{1,\ldots,90\} \setminus {\cal B}_{11}\bigr) \times {\cal B}_{11},
\label{eq58} \\
&\sum_{i=1}^3 v_i + \sum_{r=1}^3 u_r =1, \label{eq59}\\
&\bm{v}\geq \bm{0}, \ \bm{u} \geq \bm{0}.\label{eq60}
\end{align}  

The infeasibility implies that the inclusion of DMU$_2$ (China)
may create a contradiction in the trade-offs assumptions, and
this is likely due to its distinct input--output structure compared to
other top performances.
To ensure the existence of valid multipliers, we excluded DMU$_2$ and 
defined a refined reference set ${\cal B}_{11-}$.
Then, there is a feasible solution to~\eqref{eq59},~\eqref{eq60} and 

\begin{align*}
&\bm{v}\bigl(  \bm{x}_p -\bm{x}_q \bigr) - \bm{u}\bigl(  \bm{y}_p -\bm{y}_q \bigr) \geq 0\ \forall (p,q) \in \bigl(\{1,\ldots,90\} \setminus {\cal B}_{11-}\bigr) \times {\cal B}_{11-},\\
&\bm{v}\bigl(  \bm{x}_p -\bm{x}_q \bigr) - \bm{u}\bigl(  \bm{y}_p -\bm{y}_q \bigr) \geq 0\ \forall (p,q) \in \bigl({\cal B}_{11-}\setminus\{7\}\bigr) \times \{7\},\\
& u_2 - u_3 \geq 0,
\end{align*}    
where DMU$_7$ represents Great Britain.

We generate $R^-$ and $R^+$ defined by~\eqref{R+def} and~\eqref{00output}  
and define a VRS PPS with  production trade-offs by $R^-$ and $R^+$  as follows:  
\begin{align}
T_{\mbox{\tiny VRS-TO}}=
\left\{ 
\left(\bm{x},\bm{y}\right) 
\begin{array}{|l}
 \sum_{j=1}^{90} \lambda_j \bm{x}_j +\sum_{(p,q)\in K} \pi_{(p,q)} \bm{r}^-_{(p,q)} \leq \bm{x}, \\
 \sum_{j=1}^{90} \lambda_j \bm{y}_j +\sum_{(p,q)\in K} \pi_{(p,q)} \bm{r}^+_{(p,q)} \geq \bm{y}, \\
 \sum_{j=1}^{90} \lambda_j =1, \\
 \lambda_j \geq 0\ (j=1,\ldots,90), \\
  \pi_{(p,q)} \geq 0, \ (p,q)\in K
\end{array}
\right\}\cap (\bbR^{m}_+\times \bar{\bbR}^s_+),\label{PPS_olympic}
\end{align}
where
\begin{align}
  K:= 
  \Bigl(\bigl(\{1,\ldots,90\}\setminus {\cal B}_{11-}\bigr)
  \times {\cal B}_{11-}\Bigr)
  \cup
  \Bigl(\bigl(({\cal B}_{11-}\setminus\{7\}) \times \{ 7 \}\bigr)
  \cup \bigl( \{0\}\times\{0\} \bigr)\Bigr),
  \label{Kdef}
\end{align}
and
\begin{gather}
  \bm{r}^-_{(p,q)} := \bm{x}_p -\bm{x}_q,\ \bm{r}^+_{(p,q)} := \bm{y}_p -\bm{y}_q, \ (p,q)\in K\setminus\{(0,0)\},\label{R+def}                                                    \\
  \bm{r}^-_{(0,0)} := \bm{0},  \ \bm{r}^+_{(0,0)} :=(0,-1,1)^{\top}. \label{00output}
\end{gather}

Since the optimal values 
$v_i^*$ of~\eqref{eqVC} for each $i=1,2,3$ and 
$u_r^*$ of~\eqref{eqUC} for each $r=1,2,3$ are positive,
as shown in Table~\ref{tabv*u*},
the PPS $T_{\mbox{\tiny VRS-TO}}$ defined by~\eqref{PPS_olympic} 
satisfies $\partial^s(P)=\partial^w(P)$ by Theorem~\ref{Le4a.ineff}.
\begin{table}[htb]
\centering 
\caption{   Optimal values of~\eqref{eqVC} and~\eqref{eqUC}  }\label{tabv*u*}
\begin{tabular}{*{3}{r}c*{3}{r}} \hline \hline
\multicolumn{3}{c}{Optimal values of  \eqref{eqVC}} & & \multicolumn{3}{c}{Optimal values of  \eqref{eqUC}} \\ \cline{1-3} \cline{5-7}
$v_1^*$ & $v_2^*$ & $v_3^*$ && $u_1^*$ & $u_2^*$ & $u_3^*$ \\ \hline 
0.00003 & 0.00006 & 0.00777 && 0.57774 & 0.14589&0.10255 \\ \hline \hline 
\end{tabular}
\end{table}
\par
The optimal values $u_r^*$, $r=1,2,3$, of~\eqref{eqUC}  satisfy
$u_1^*>u_2^*>u_3^*>0$, which is consistent with the natural relationship
between the values of gold, silver and bronze medals.  
Since  the optimal value of~\eqref{Free} is  29.84183, 
Theorem~\ref{FreelunchCheck} implies that  
the PPS $T_{\mbox{\tiny VRS-TO}}$ defined by~\eqref{PPS_olympic}  has $(\bm{0},\bm{y})\in T_{\mbox{\tiny VRS-TO}}$
for some $\bm{y}\in \bar{\bbR}^s_{+}$,
which means free lunch.
Therefore, 
the PPS $T_{\mbox{\tiny VRS-TO}}$  allows an  unnatural production process.
As shown in Tables~\ref{tab:Paris1}--\ref{tab:Paris3},
the modified RGM $\Efgl$ 
sometimes provides a zero input target $\bm{x}^*=\bm{0}$ and positive output target. 
For example,  in the last column of Table~\ref{tab:Paris1}, we see that DMU$_{23}$ has zero input target $\bm{x}^*=\bm{0}$.
Besides,  since the output target of DMU$_{54}$ is positive,
the input--output target of DMU$_{54}$ means free lunch.
Thus, the modified RGM $\Efgl$ 
exposes free lunch of $T_{\mbox{\tiny VRS-TO}}$ by the input target of the assessed DMU.
Since $\bm{y}_{54}>\bm{0}$, the target given by 
$\barEfgl$ is the same as that given by $\Efgl$.
Therefore, both   $\barEfgl$ and $\Efgl$ cannot avoid a free lunch target;
meanwhile,  $\hat{F}$ can avoid it  as  ensured by Theorem~\ref{Theorem2}.
\par
The free lunch target of DMU$_{54}$ implies violation of strong monotonicity (SM) of efficiency measures. 
In fact,  DMU$_{52}$ has the same efficiency score of $\Efgl$ as that of DMU$_{54}$ while 
$\bm{x}_{52}=(14024,45700,143)^{\top}> (4192,12500,26)^{\top}=\bm{x}_{54}$ and $\bm{y}_{52}=(1,1,1)^{\top}=\bm{y}_{54}$. 
Hence, $\Efgl(\bm{x}_{54},\bm{y}_{54})=  \Efgl(\bm{x}_{52},\bm{y}_{52})$ when 
$(\bm{x}_{52},-\bm{y}_{52})\geq (\bm{x}_{54},-\bm{y}_{54})$ and $(\bm{x}_{52},-\bm{y}_{52})\not= (\bm{x}_{54},-\bm{y}_{54})$. 
The efficiency score of DMU$_{54}$ by $\Efgl$  shows a counterexample to (SM) of both $\Efgl$ and $\barEfgl$.
Moreover, in Tables~\ref{tab:Paris1}--\ref{tab:Paris3},
we have the same  counterexamples to (SM) of both $\Efgl$ and $\barEfgl$ on 11 DMU$_j$ for $j\in \{23,25,26,28,31,33,42,43,44,45,51\}$. 
 \par
 On the other hand, by Theorem~\ref{Theorem1}  and $\bm{y}_j\in \bar{\bbR}^3_+$  for all $j=1,\ldots,90$,
	 $\hat{F}$ satisfies (SM); that is, for each $j=1,\ldots,90$, we have
\[
  \hat{F}(\bm{x}_j,\bm{y}_j; \Tvrs)
  >\hat{F}(\bm{x}',\bm{y}'; \Tvrs)
\]
 if $(\bm{x}_j,-\bm{y}_j)\leq (\bm{x}',-\bm{y}')$ and  $(\bm{x}_j,-\bm{y}_j)\not= (\bm{x}',-\bm{y}')$.
  
Let us conclude this section by summarizing the results of the 
experiment with a real-world dataset.
Although the considered PPS $\Tvrs$ with the dataset 
allows free lunch as shown in Theorem~\ref{FreelunchCheck}, 
the extended max RGM $\hat{F}$ always finds a nonzero input target, which 
is ensured by Theorem~\ref{Theorem2}.
This indicates that our model provides realistic targets,
while the existing model provides unrealistic targets with
zero inputs.
Furthermore, as shown by comparing DMU$_{52}$ and DMU$_{54}$,
the strong monotonicity is not satisfied for $\Efgl$.
For these reasons, our approach provides practical analysis results
on this dataset.

\begin{table}[phtb!]
\tabcolsep = 4pt
\centering
\caption{Paris 2024 Summer Olympic Games (DMU$_1,\ldots,\mathrm{DMU}_{30}$) }\label{tab:Paris1}
\begin{tabular}{lp{46pt}p{10pt}*{2}{p{36pt}}p{12pt}p{5pt}*{3}{p{12pt}}p{-10pt}p{22pt}p{56pt}p{-5pt}p{22pt}p{36pt}}
\hline \hline \small
No.&DMU&&\multicolumn{3}{c}{Inputs}&&\multicolumn{3}{c}{Outputs}&&\multicolumn{2}{c}{$\hat{F}$}&&\multicolumn{2}{c}{$\Efgl$}\\ \cline{12-13} \cline{15-16} \cline{4-6} \cline{8-10}
&&&$x_1$&$x_2$&$x_3$&&$y_1$&$y_2$&$y_3$&&Score&Target&&Score&Target\\\hline
1&USA&&81632&334915&619&&40&44&42&&1.000&&&1.000&\\
2&China&&12514&1425671&398&&40&27&24&&0.883&$y_1^*=134.3$&&0.322&\\
3&Japan&&33806&123400&431&&20&12&13&&0.971&$y_1^*=24.1$&&0.840&\\
4&Australia&&65434&26704&475&&18&19&16&&1.000&&&1.000&\\
5&France&&46001&68413&600&&16&26&22&&0.995&$y_1^*=16.5$&&0.982&\\
6&Netherlands&&62719&17564&289&&15&7&12&&0.982&$y_1^*=16.8$&&0.818&\\
7&Great Britain&&49099&67510&342&&14&22&29&&1.000&&&1.000&\\
8&Korea&&33192&51784&147&&13&9&10&&0.989&$y_1^*=13.9$&&0.938&\\
9&Italy&&38326&58862&397&&12&13&15&&0.947&$y_1^*=17.6$&&0.811&\\
10&Germany&&52727&83240&457&&12&13&8&&0.916&$y_1^*=24.2$&&0.651&\\
11&NewZealand&&47537&5228.1&208&&10&7&3&&0.950&$y_1^*=14.3$&&0.587&\\
12&Canada&&53548&40454&332&&9&7&11&&0.913&$y_1^*=18.8$&&0.600&\\
13&Uzbekistan&&2523&36296&88&&8&2&3&&0.961&$y_1^*=10.5$&&0.538&\\
14&Hungary&&22147&9635&177&&6&7&6&&0.929&$y_1^*=10.4$&&0.596&\\
15&Spain&&33071&47433&401&&5&4&9&&0.877&$y_1^*=19.1$&&0.446&\\
16&Sweden&&56225&10420&125&&4&4&3&&0.878&$y_1^*=14.8$&&0.317&\\
17&Kenya&&2113&55610&74&&4&2&5&&0.891&$y_1^*=11.5$&&0.334&\\
18&Norway&&87739&5406&109&&4&1&3&&0.872&$y_1^*=17.3$&&0.233&\\
19&Brazil&&10642&203080&290&&3&7&10&&0.852&$y_2^*=63.9$&&0.286&\\
20&Iran&&4663&89137&41&&3&6&3&&0.869&$y_1^*=13.9$&&0.299&\\
21&Ukraine&&5337&36515&141&&3&5&4&&0.879&$y_1^*=10.9$&&0.354&\\
22&Romania&&18176&19060&96&&3&4&2&&0.880&$y_1^*=10.8$&&0.286&\\
23&Georgia&&8173&3727&28&&3&3&1&&0.901&$y_1^*=7.4$&&0.234&$\bm{x}^*=\bm{0}$\\
24&Belgium&&53659&11630&177&&3&1&6&&0.865&$y_1^*=15.6$&&0.269&\\
25&Bulgaria&&15854&6640&46&&3&1&3&&0.889&$y_1^*=9$&&0.234&$\bm{x}^*=\bm{0}$\\
26&Serbia&&11327&6716&114&&3&1&1&&0.883&$y_1^*=10$&&0.184&$\bm{x}^*=\bm{0}$\\
27&Ireland&&104272&5211&143&&4&0&3&&0.868&$y_1^*=19.2$&&0.213&\\
28&Czechia&&30600&10700&111&&3&0&2&&0.873&$y_1^*=12.7$&&0.178&$\bm{x}^*=\bm{0}$\\
29&Denmark&&68300&5920&131&&2&2&5&&0.854&$y_1^*=15.9$&&0.228&\\
30&Azerbaijan&&7525&10470&48&&2&2&3&&0.875&$y_1^*=8.1$&&0.215&\\
 \hline \hline
\end{tabular}
\end{table}
\begin{table}[phtb]
\tabcolsep = 4pt
\centering
\caption{   Paris 2024 Summer  Olympic Games   (DMU$_{31}, \ldots, $DMU$_{60}$) }\label{tab:Paris2}
\begin{tabular}{lp{46pt}p{10pt}*{2}{p{36pt}}p{12pt}p{5pt}*{3}{p{8pt}}p{-10pt}p{22pt}p{48pt}p{-5pt}p{22pt}p{36pt}}
\hline \hline \small
No.&DMU&&\multicolumn{3}{c}{Inputs}&&\multicolumn{3}{c}{Outputs}&&\multicolumn{2}{c}{$\hat{F}$}&&\multicolumn{2}{c}{$\Efgl$}\\ \cline{12-13} \cline{15-16} \cline{4-6} \cline{8-10}
&&&$x_1$&$x_2$&$x_3$&&$y_1$&$y_2$&$y_3$&&Score&Target&&Score&Target\\\hline
31&Croatia&&21347&3850&73&&2&2&3&&0.868&$y_1^*=9.7$&&0.215&$\bm{x}^*=\bm{0}$\\
32&Cuba&&11194&11147&61&&2&1&6&&0.873&$y_1^*=8.4$&&0.236&\\
33&Bahrain&&28262&1517&14&&2&1&1&&0.867&$y_1^*=9.9$&&0.143&$\bm{x}^*=\bm{0}$\\
34&Slovenia&&32233&2100&95&&2&1&0&&0.861&$y_1^*=12.1$&&0.111&$\bm{x}^*=\bm{0}$\\
35&Chinese Taipei&&32444&23500&60&&2&0&5&&0.860&$y_1^*=12.6$&&0.188&\\
36&Austria&&57081&9090&84&&2&0&3&&0.856&$y_1^*=14.9$&&0.154&$\bm{x}^*=\bm{0}$\\
37&HongKong&&50030&7474&34&&2&0&2&&0.859&$y_1^*=13.2$&&0.134&$\bm{x}^*=\bm{0}$\\
38&Philippines&&3868&117300&22&&2&0&2&&0.852&$y_1^*=17.5$&&0.133&\\
39&Algeria&&5324&45300&46&&2&0&1&&0.862&$y_1^*=11.8$&&0.110&\\
40&Indonesia&&4942&277500&29&&2&0&1&&0.844&$y_1^*=32.6$&&0.098&\\
41&Israel&&52219&9600&89&&1&5&1&&0.856&$y_2^*=37.1$&&0.166&\\
42&Poland&&21996&37470&226&&1&4&5&&0.852&$y_2^*=36$&&0.229&$\bm{x}^*=\bm{0}$\\
43&Kazakhstan&&13117&19800&79&&1&3&3&&0.853&$y_2^*=26$&&0.177&$\bm{x}^*=\bm{0}$\\
44&Jamaica&&6876&2820&65&&1&3&2&&0.858&$y_2^*=20.4$&&0.158&$\bm{x}^*=\bm{0}$\\
45&SouthAfrica&&6138&60000&141&&1&3&2&&0.848&$y_2^*=34.4$&&0.158&$\bm{x}^*=\bm{0}$\\
46&Thailand&&7337&71600&52&&1&3&2&&0.848&$y_2^*=34.8$&&0.158&\\
47&Ethiopia&&1511&126500&33&&1&3&0&&0.845&$y_2^*=44.7$&&0.106&\\
48&Switzerland&&100413&8770&137&&1&2&5&&0.846&$y_3^*=67.8$&&0.168&\\
49&Ecuador&&6582&17900&40&&1&2&2&&0.852&$y_1^*=8.7$&&0.139&\\
50&Portugal&&27880&10300&75&&1&2&1&&0.848&$y_1^*=11.5$&&0.117&\\
51&Greece&&22805&10400&100&&1&1&6&&0.855&$y_3^*=45.6$&&0.176&$\bm{x}^*=\bm{0}$\\
52&Argentina&&14024&45700&143&&1&1&1&&0.845&$y_1^*=14.4$&&0.097&$\bm{x}^*=\bm{0}$\\
53&Egypt&&3728&112700&157&&1&1&1&&0.842&$y_1^*=19.4$&&0.097&\\
54&Tunisia&&4192&12500&26&&1&1&1&&0.854&$y_1^*=8.1$&&0.097&$\bm{x}^*=\bm{0}$\\
55&Botswana&&7642&2640&14&&1&1&0&&0.855&$y_1^*=7.6$&&0.068&$\bm{x}^*=\bm{0}$\\
56&Chile&&16816&19600&48&&1&1&0&&0.849&$y_1^*=10.9$&&0.068&\\
57&SaintLucia&&13546&180&4&&1&1&0&&0.854&$y_1^*=8$&&0.068&$\bm{x}^*=\bm{0}$\\
58&Uganda&&1139&48500&25&&1&1&0&&0.848&$y_1^*=11.2$&&0.068&\\
59&Dominican Republic&&11187&11300&59&&1&0&2&&0.851&$y_1^*=9.4$&&0.086&$\bm{x}^*=\bm{0}$\\
60&Guatemala&&5369&18200&16&&1&0&1&&0.852&$y_1^*=8.8$&&0.067&$\bm{x}^*=\bm{0}$\\
 \hline \hline
\end{tabular}
\end{table}
\begin{table}[phtb]
\tabcolsep = 4pt
\centering
\caption{  Paris 2024 Summer  Olympic Games  (DMU$_{61}, \ldots, $DMU$_{90}$) }\label{tab:Paris3}
\begin{tabular}{lp{46pt}p{10pt}*{2}{p{36pt}}p{12pt}p{5pt}*{3}{p{8pt}}p{-10pt}p{22pt}p{55pt}p{-5pt}p{22pt}p{36pt}}
\hline \hline \small
No.&DMU&&\multicolumn{3}{c}{Inputs}&&\multicolumn{3}{c}{Outputs}&&\multicolumn{2}{c}{$\hat{F}$}&&\multicolumn{2}{c}{$\Efgl$}\\ \cline{12-13} \cline{15-16} \cline{4-6} \cline{8-10}
&&&$x_1$&$x_2$&$x_3$&&$y_1$&$y_2$&$y_3$&&Score&Target&&Score&Target\\\hline
61&Morocco&&3889&37800&61&&1&0&1&&0.848&$y_1^*=11.2$&&0.067&$\bm{x}^*=\bm{0}$\\
62&Dominica&&8837&73&4&&1&0&0&&0.855&$y_1^*=7.6$&&0.039&$\bm{x}^*=\bm{0}$\\
63&Pakistan&&1461&240500&7&&1&0&0&&0.839&$y_1^*=28.6$&&0.039&\\
64&Turkey&&12849&85900&101&&0&3&5&&0.846&$y_3^*=65.3$&&0.106&\\
65&Mexico&&13642&129420&108&&0&3&2&&0.843&$y_2^*=52.4$&&0.070&\\
66&Armenia&&8153&2800&15&&0&3&1&&0.856&$y_2^*=21.6$&&0.058&$\bm{x}^*=\bm{0}$\\
67&Colombia&&6972&51900&88&&0&3&1&&0.848&$y_2^*=33.7$&&0.058&$\bm{x}^*=\bm{0}$\\
68&Kyrgyzstan&&1843&6700&79&&0&2&4&&0.852&$y_3^*=35.3$&&0.078&$\bm{x}^*=\bm{0}$\\
69&NorthKorea&&599&26100&14&&0&2&4&&0.850&$y_3^*=38.7$&&0.078&$\bm{x}^*=\bm{0}$\\
70&Lithuania&&27026&2800&51&&0&2&2&&0.845&$y_2^*=28.2$&&0.055&$\bm{x}^*=\bm{0}$\\
71&India&&2500&1438069&112&&0&1&5&&0.835&$y_3^*=468.8$&&0.048&\\
72&Moldova&&6832&2600&26&&0&1&3&&0.848&$y_3^*=35.2$&&0.050&$\bm{x}^*=\bm{0}$\\
73&Kosovo&&5917&1800&9&&0&1&1&&0.841&$y_2^*=20.5$&&0.027&$\bm{x}^*=\bm{0}$\\
74&Cyprus&&34957&1250&15&&0&1&0&&0.839&$y_2^*=30.1$&&0.014&$\bm{x}^*=\bm{0}$\\
75&Fiji&&5993&940&36&&0&1&0&&0.841&$y_2^*=21.5$&&0.014&$\bm{x}^*=\bm{0}$\\
76&Jordan&&4498&11300&12&&0&1&0&&0.841&$y_2^*=22.5$&&0.014&\\
77&Mongolia&&5668&3400&32&&0&1&0&&0.841&$y_2^*=21.8$&&0.014&$\bm{x}^*=\bm{0}$\\
78&Panama&&18726&4400&8&&0&1&0&&0.840&$y_2^*=25.4$&&0.014&$\bm{x}^*=\bm{0}$\\
79&Tajikistan&&1184&10300&14&&0&0&3&&0.847&$y_3^*=35.6$&&0.025&$\bm{x}^*=\bm{0}$\\
80&Albania&&7957&2800&8&&0&0&2&&0.843&$y_3^*=36.1$&&0.017&$\bm{x}^*=\bm{0}$\\
81&Grenada&&11624&125&6&&0&0&2&&0.842&$y_3^*=36.6$&&0.017&$\bm{x}^*=\bm{0}$\\
82&Malaysia&&12570&34100&26&&0&0&2&&0.840&$y_3^*=48.3$&&0.017&$\bm{x}^*=\bm{0}$\\
83&PuertoRico&&36369&3300&51&&0&0&2&&0.840&$y_3^*=49.5$&&0.017&$\bm{x}^*=\bm{0}$\\
84&C\^ote d'Ivoire&&2572&29000&13&&0&0&1&&0.837&$y_3^*=42.3$&&0.008&$\bm{x}^*=\bm{0}$\\
85&Cabo Verde&&4368&600&7&&0&0&1&&0.838&$y_3^*=33.1$&&0.008&$\bm{x}^*=\bm{0}$\\
86&Peru&&7933&34000&26&&0&0&1&&0.837&$y_3^*=46.6$&&0.008&\\
87&Qatar&&78696&2800&13&&0&0&1&&0.836&$y_3^*=56.8$&&0.008&$\bm{x}^*=\bm{0}$\\
88&Singapore&&84734&5900&23&&0&0&1&&0.836&$y_3^*=59.9$&&0.008&$\bm{x}^*=\bm{0}$\\
89&Slovakia&&24337&5400&28&&0&0&1&&0.837&$y_3^*=44.3$&&0.008&$\bm{x}^*=\bm{0}$\\
90&Zambia&&1381&20500&31&&0&0&1&&0.837&$y_3^*=40.4$&&0.008&$\bm{x}^*=\bm{0}$\\
 \hline \hline
\end{tabular}
\end{table}
\section{Conclusions}\label{sec:conclusion}

In this paper, to tackle the boundary problem with the RGM,
we employed the closer target setting approach to the extended PPS.
This approach allows its efficiency measure to satisfy some desirable
properties such as indication and strong monotonicity.
The desirable properties of efficiency measures are valid for the extended PPS  under various types of returns to scale. 
\par
This approach provides an efficient and no free lunch target.
Moreover,  the target achieves the input-oriented or output-oriented  least distance  to the strong efficiency frontier,
and it requires only a single input or output to be improved.
Hence, the target may be reached from the assessed DMU  with less effort. 
\par
The computation of the efficiency measure only requires
solving a series of LPs while existing modifications for the RGM require
solving nonlinear programming problems.
Through a numerical experiment with a real-world dataset,
we illustrated how to choose  $(R^-,R^+)$ and
also verified the validity of the targets compared with $\Efgl$.
For given trade-offs representation matrices $(R^-,R^+)$,
we demonstrated that it is easy to verify the assumption
$\partial^s(P)=\partial^w(P)$ by solving LPs.
\par
The proposed efficiency measure has a lower bound $1-{1}/{(m+s)}$ from Lemma~\ref{lemma1};
that is, the range of efficiency scores becomes narrower as the number 
of inputs and outputs ($m+s$) increases.
We can avoid the narrow range of  efficiency scores by changing the objective function 
$\frac{1}{m+s} \left( \sum_{i=1}^m \theta_i + \sum_{r=1}^s {1}/{\phi_r} \right)$  to 
$\sum_{i=1}^m \theta_i + \sum_{r=1}^s {1}/{\phi_r} -(m+s-1)$, whose lower bound is $0$.
\par
As future work, the proposed approach for the RGM can be extended
to other representative non-radial DEA models such as the SBM DEA model~\cite{emrouznejad2025development,tone2001slacks}.
The SBM is a ratio form of input efficiency and output efficiency~\cite{fukuyama2014distance} while the objective function of the RGM is an additive form.  
Comparative studies of empirical benchmarking using non-radial DEA models, e.g.,  RGM, the proposed efficiency measure, and  SBM, are attractive.



\end{document}